\newtheorem{theorem}{Theorem}[section]
\theoremstyle{definition}
\newtheorem{defn}[theorem]{Definition}
\newtheorem{lemma}[theorem]{Lemma}
\newtheorem{coro}[theorem]{Corollary}
\newtheorem{prop-def}{Proposition-Definition}[section]
\newtheorem{coro-def}{Corollary-Definition}[section]
\newtheorem{remark}[theorem]{Remark}
\newtheorem{exam}[theorem]{Example}
\newcommand{\nc}{\newcommand}
\nc{\tred}[1]{\textcolor{red}{#1}}
\nc{\tblue}[1]{\textcolor{blue}{#1}}
\nc{\tgreen}[1]{\textcolor{green}{#1}}
\nc{\tpurple}[1]{\textcolor{purple}{#1}}
\nc{\btred}[1]{\textcolor{red}{\bf #1}}
\nc{\btblue}[1]{\textcolor{blue}{\bf #1}}
\nc{\btgreen}[1]{\textcolor{green}{\bf #1}}
\nc{\btpurple}[1]{\textcolor{purple}{\bf #1}}
\nc{\NN}{{\mathbb N}}
\nc{\ncsha}{{\mbox{\cyr X}^{\mathrm NC}}} \nc{\ncshao}{{\mbox{\cyr
X}^{\mathrm NC}_0}}
\newcommand{\efootnote}[1]{}
\renewcommand{\textbf}[1]{}
\newcommand{\delete}[1]{}
\nc{\mlabel}[1]{\label{#1}}  
\nc{\mcite}[1]{\cite{#1}}  
\nc{\mref}[1]{\ref{#1}}  
\nc{\mbibitem}[1]{\bibitem{#1}} 
\nc{\mlabel}[1]{\label{#1}{\hfill \hspace{1cm}{\bf{{\ }\hfill(#1)}}}}
\nc{\mcite}[1]{\cite{#1}{{\bf{{\ }(#1)}}}}  
\nc{\mref}[1]{\ref{#1}{{\bf{{\ }(#1)}}}}  
\nc{\mbibitem}[1]{\bibitem[\bf #1]{#1}} 
\nc{\opa}{\ast} \nc{\opb}{\odot} \nc{\op}{\bullet} \nc{\pa}{\frakL}
\nc{\arr}{\rightarrow} \nc{\lu}[1]{(#1)} \nc{\mult}{\mrm{mult}}
\nc{\diff}{\mathfrak{Diff}}
\nc{\opc}{\sharp}\nc{\opd}{\natural}
\nc{\ope}{\circ}
\nc{\dpt}{\mathrm{d}}
\nc{\hck}{H_{RT}}
\nc{\vdf}{\calf}
\nc{\ldf}{\calf_\ell}
\nc{\hlf}{H_\ell}
\nc{\onek}{\mathbf{1}_\bfk}
\nc{\diam}{alternating\xspace}
\nc{\Diam}{Alternating\xspace}
\nc{\cdiam}{canonical alternating\xspace}
\nc{\Cdiam}{Canonical alternating\xspace}
\nc{\AW}{\mathcal{A}}
\nc{\ari}{\mathrm{ar}}
\nc{\lef}{\mathrm{lef}}
\nc{\Sh}{\mathrm{ST}}
\nc{\Cr}{\mathrm{Cr}}
\nc{\st}{{Schr\"oder tree}\xspace}
\nc{\sts}{{Schr\"oder trees}\xspace}
\nc{\vertset}{\Omega} 
\nc{\assop}{\quad \begin{picture}(5,5)(0,0)
\line(-1,1){10}
\put(-2.2,-2.2){$\bullet$}
\line(0,-1){10}\line(1,1){10}
\end{picture} \quad \smallskip}
\nc{\operator}{\begin{picture}(5,5)(0,0)
\line(0,-1){6}
\put(-2.6,-1.8){$\bullet$}
\line(0,1){9}
\end{picture}}
\nc{\idx}{\begin{picture}(6,6)(-3,-3)
\put(0,0){\line(0,1){6}}
\put(0,0){\line(0,-1){6}}
\end{picture}}
\nc{\pb}{{\mathrm{pb}}}
\nc{\Lf}{{\mathrm{Lf}}}
\nc{\lft}{{left tree}\xspace}
\nc{\lfts}{{left trees}\xspace}
\nc{\fat}{{fundamental averaging tree}\xspace}
\nc{\fats}{{fundamental averaging trees}\xspace}
\nc{\avt}{\mathrm{Avt}}
\nc{\rass}{{\mathit{RAss}}}
\nc{\aass}{{\mathit{AAss}}}
\nc{\vin}{{\mathrm Vin}}    
\nc{\lin}{{\mathrm Lin}}    
\nc{\inv}{\mathrm{I}n}
\nc{\gensp}{V} 
\nc{\genbas}{\mathcal{V}} 
\nc{\bvp}{V_P}     
\nc{\gop}{{\,\omega\,}}     
\nc{\bin}[2]{ (_{\stackrel{\scs{#1}}{\scs{#2}}})}  
\nc{\binc}[2]{ \left (\!\! \begin{array}{c} \scs{#1}\\
    \scs{#2} \end{array}\!\! \right )}  
\nc{\bincc}[2]{  \left ( {\scs{#1} \atop
    \vspace{-1cm}\scs{#2}} \right )}  
\nc{\bs}{\bar{S}} \nc{\cosum}{\sqsubset} \nc{\la}{\longrightarrow}
\nc{\rar}{\rightarrow} \nc{\dar}{\downarrow} \nc{\dprod}{**}
\nc{\dap}[1]{\downarrow \rlap{$\scriptstyle{#1}$}}
\nc{\md}{\mathrm{dth}} \nc{\uap}[1]{\uparrow
\rlap{$\scriptstyle{#1}$}} \nc{\defeq}{\stackrel{\rm def}{=}}
\nc{\disp}[1]{\displaystyle{#1}} \nc{\dotcup}{\
\displaystyle{\bigcup^\bullet}\ } \nc{\gzeta}{\bar{\zeta}}
\nc{\hcm}{\ \hat{,}\ } \nc{\hts}{\hat{\otimes}}
\nc{\barot}{{\otimes}} \nc{\free}[1]{\bar{#1}}
\nc{\uni}[1]{\tilde{#1}} \nc{\hcirc}{\hat{\circ}} \nc{\lleft}{[}
\nc{\lright}{]} \nc{\lc}{\lfloor} \nc{\rc}{\rfloor}
\nc{\curlyl}{\left \{ \begin{array}{c} {} \\ {} \end{array}
    \right .  \!\!\!\!\!\!\!}
\nc{\curlyr}{ \!\!\!\!\!\!\!
    \left . \begin{array}{c} {} \\ {} \end{array}
    \right \} }
\nc{\longmid}{\left | \begin{array}{c} {} \\ {} \end{array}
    \right . \!\!\!\!\!\!\!}
\nc{\onetree}{\bullet} \nc{\ora}[1]{\stackrel{#1}{\rar}}
\nc{\ola}[1]{\stackrel{#1}{\la}}
\nc{\ot}{\otimes} \nc{\mot}{{{\boxtimes\,}}}
\nc{\otm}{\overline{\boxtimes}} \nc{\sprod}{\bullet}
\nc{\scs}[1]{\scriptstyle{#1}} \nc{\mrm}[1]{{\rm #1}}
\nc{\margin}[1]{\marginpar{\rm #1}}   
\nc{\dirlim}{\displaystyle{\lim_{\longrightarrow}}\,}
\nc{\invlim}{\displaystyle{\lim_{\longleftarrow}}\,}
\nc{\mvp}{\vspace{0.3cm}} \nc{\tk}{^{(k)}} \nc{\tp}{^\prime}
\nc{\ttp}{^{\prime\prime}} \nc{\svp}{\vspace{2cm}}
\nc{\vp}{\vspace{8cm}} \nc{\proofbegin}{\noindent{\bf Proof: }}
\nc{\proofend}{$\blacksquare$ \vspace{0.3cm}}
\nc{\modg}[1]{\!<\!\!{#1}\!\!>}
\nc{\intg}[1]{F_C(#1)} \nc{\lmodg}{\!
<\!\!} \nc{\rmodg}{\!\!>\!}
\nc{\cpi}{\widehat{\Pi}}
\nc{\sha}{{\mbox{\cyr X}}}  
\nc{\shap}{{\mbox{\cyrs X}}} 
\nc{\shpr}{\diamond}    
\nc{\shp}{\ast} \nc{\shplus}{\shpr^+}
\nc{\shprc}{\shpr_c}    
\nc{\msh}{\ast} \nc{\zprod}{m_0} \nc{\oprod}{m_1}
\nc{\vep}{\epsilon} \nc{\labs}{\mid\!} \nc{\rabs}{\!\mid}
\nc{\sqmon}[1]{\langle #1\rangle}
\nc{\mmbox}[1]{\mbox{\ #1\ }} \nc{\dep}{\mrm{dep}} \nc{\fp}{\mrm{FP}}
\nc{\rchar}{\mrm{char}} \nc{\End}{\mrm{End}} \nc{\Fil}{\mrm{Fil}}
\nc{\Mor}{Mor\xspace} \nc{\gmzvs}{gMZV\xspace}
\nc{\gmzv}{gMZV\xspace} \nc{\mzv}{MZV\xspace}
\nc{\mzvs}{MZVs\xspace} \nc{\Hom}{\mrm{Hom}} \nc{\id}{\mrm{id}}
\nc{\im}{\mrm{im}} \nc{\incl}{\mrm{incl}} \nc{\map}{\mrm{Map}}
\nc{\mchar}{\rm char} \nc{\nz}{\rm NZ} \nc{\supp}{\mathrm Supp}
\nc{\Alg}{\mathbf{Alg}} \nc{\Bax}{\mathbf{Bax}} \nc{\bff}{\mathbf f}
\nc{\bfk}{{\bf k}} \nc{\bfone}{{\bf 1}} \nc{\bfx}{\mathbf x}
\nc{\bfy}{\mathbf y}
\nc{\base}[1]{\bfone^{\otimes ({#1}+1)}} 
\nc{\Cat}{\mathbf{Cat}}
\nc{\detail}{\marginpar{\bf More detail}
    \noindent{\bf Need more detail!}
    \svp}
\nc{\Int}{\mathbf{Int}} \nc{\Mon}{\mathbf{Mon}}
\nc{\rbtm}{{shuffle }} \nc{\rbto}{{Rota-Baxter }}
\nc{\remarks}{\noindent{\bf Remarks: }} \nc{\Rings}{\mathbf{Rings}}
\nc{\Sets}{\mathbf{Sets}} \nc{\wtot}{\widetilde{\odot}}
\nc{\wast}{\widetilde{\ast}} \nc{\bodot}{\bar{\odot}}
\nc{\bast}{\bar{\ast}} \nc{\hodot}[1]{\odot^{#1}}
\nc{\hast}[1]{\ast^{#1}} \nc{\mal}{\mathcal{O}}
\nc{\tet}{\tilde{\ast}} \nc{\teot}{\tilde{\odot}}
\nc{\oex}{\overline{x}} \nc{\oey}{\overline{y}}
\nc{\oez}{\overline{z}} \nc{\oef}{\overline{f}}
\nc{\oea}{\overline{a}} \nc{\oeb}{\overline{b}}
\nc{\weast}[1]{\widetilde{\ast}^{#1}}
\nc{\weodot}[1]{\widetilde{\odot}^{#1}} \nc{\hstar}[1]{\star^{#1}}
\nc{\lae}{\langle} \nc{\rae}{\rangle}
\nc{\lf}{\lfloor}
\nc{\rf}{\rfloor}
\nc{\QQ}{{\mathbb Q}}
\nc{\RR}{{\mathbb R}} \nc{\ZZ}{{\mathbb Z}}
\nc{\cala}{{\mathcal A}} \nc{\calb}{{\mathcal B}}
\nc{\calc}{{\mathcal C}}
\nc{\cald}{{\mathcal D}} \nc{\cale}{{\mathcal E}}
\nc{\calf}{{\mathcal F}} \nc{\calg}{{\mathcal G}}
\nc{\calh}{{\mathcal H}} \nc{\cali}{{\mathcal I}}
\nc{\call}{{\mathcal L}} \nc{\calm}{{\mathcal M}}
\nc{\caln}{{\mathcal N}} \nc{\calo}{{\mathcal O}}
\nc{\calp}{{\mathcal P}} \nc{\calr}{{\mathcal R}}
\nc{\cals}{{\mathcal S}} \nc{\calt}{{\mathcal T}}
\nc{\calu}{{\mathcal U}} \nc{\calw}{{\mathcal W}} \nc{\calk}{{\mathcal K}}
\nc{\calx}{{\mathcal X}} \nc{\CA}{\mathcal{A}}
\nc{\fraka}{{\mathfrak a}} \nc{\frakA}{{\mathfrak A}}
\nc{\frakb}{{\mathfrak b}} \nc{\frakB}{{\mathfrak B}}
\nc{\frakD}{{\mathfrak D}} \nc{\frakF}{\mathfrak{F}}
\nc{\frakf}{{\mathfrak f}} \nc{\frakg}{{\mathfrak g}}
\nc{\frakH}{{\mathfrak H}} \nc{\frakL}{{\mathfrak L}}
\nc{\frakM}{{\mathfrak M}} \nc{\bfrakM}{\overline{\frakM}}
\nc{\frakm}{{\mathfrak m}} \nc{\frakP}{{\mathfrak P}}
\nc{\frakN}{{\mathfrak N}} \nc{\frakp}{{\mathfrak p}}
\nc{\frakS}{{\mathfrak S}} \nc{\frakT}{\mathfrak{T}}
\nc{\frakX}{{\mathfrak X}}
\nc{\BS}{\mathbb{S
}}
\font\cyr=wncyr10 \font\cyrs=wncyr7
\nc{\li}[1]{\textcolor{red}{Li:#1}}
\nc{\yi}[1]{\textcolor{blue}{Yi: #1}}
\nc{\xing}[1]{\textcolor{purple}{Xing:#1}}
\nc{\revise}[1]{\textcolor{red}{#1}}
\nc{\ID}{{\rm I}}\nc{\lbar}[1]{\overline{#1}}\nc{\bre}{{\rm bre}}
\nc{\sd}{\cals}\nc{\rb}{\rm RB}\nc{\A}{\rm A}\nc{\LL}{\rm L}\nc{\tx}{\tilde{X}}
\nc{\col}{\Delta_{\epsilon}}\nc{\mul}{m_{\mathrm{RT}}}\nc{\ul}{u_{RT}}\nc{\epl}{\epsilon_{RT}}
\nc{\hl}{H_{RT}}\nc{\arro}[1]{#1}\nc{\px}{P_{\tx}}\nc{\pw}{P_{\mathfrak{w}}}\nc{\pl}{B^+}
\nc{\pp}{\pl}\nc{\ppp}[1]{B^+(#1)}\nc{\dw}{\diamond_{\mathfrak{w}}}\nc{\dl}{\diamond_{\rm \ell}}
\nc{\ncshaw}{\sha^{{\rm NC}}_{\mathfrak{w}}}\nc{\ncshal}{\sha^{{\rm NC}}_{{\rm \ell}}}
\nc{\ver}{\rm V}\nc{\ld}{l}\nc{\del}{\Delta_{{\rm \ell}}}\nc{\epsl}{\epsilon_{{\rm \ell}}}
\nc{\uul}{u_{{\rm \ell}}}\nc{\oneh}{\mathbf{1}}\nc{\onew}{\mathbf{1}}
\nc{\etree}{\mathbbm{{1}}}
\nc{\conc}{m_{RT}} \nc{\subq}{\bfk Q_l} \nc{\fid}{\unlhd}  \nc{\sfid}{\lhd}
\nc{\lhl}{\leq_{h,l}} \nc{\ghl}{\geq_{hl}}
\nc{\RT}{\mathrm{RT}}
\nc{\hrtb}{H_{RT}(X\sqcup\Omega)} \nc{\hrts}{H_{\mathrm{RT}}(X, \Omega)}\nc{\rts}{\mathcal{T}(X, \Omega)}\nc{\rfs}{\mathcal{F}(X, \Omega)} \nc{\counit}{\varepsilon_{\mathrm{RT}}}
\newcommand{\tun}{\begin{picture}(5,0)(-2,-1)
\put(0,0){\circle*{2}}
\end{picture}}
\newcommand{\tdeux}{\begin{picture}(7,7)(0,-1)
\put(3,0){\circle*{2}}
\put(3,0){\line(0,1){5}}
\put(3,5){\circle*{2}}
\end{picture}}
\newcommand{\ttroisun}{\begin{picture}(15,8)(-5,-1)
\put(3,0){\circle*{2}}
\put(-0.65,0){$\vee$}
\put(6,7){\circle*{2}}
\put(0,7){\circle*{2}}
\end{picture}}
\newcommand{\ttroisdeux}{\begin{picture}(5,12)(-2,-1)
\put(0,0){\circle*{2}}
\put(0,0){\line(0,1){5}}
\put(0,5){\circle*{2}}
\put(0,5){\line(0,1){5}}
\put(0,10){\circle*{2}}
\end{picture}}
\newcommand{\tquatreun}{\begin{picture}(15,12)(-5,-1)
\put(3,0){\circle*{2}}
\put(-0.65,0){$\vee$}
\put(6,7){\circle*{2}}
\put(0,7){\circle*{2}}
\put(3,7){\circle*{2}}
\put(3,0){\line(0,1){7}}
\end{picture}}
\newcommand{\tquatredeux}{\begin{picture}(15,18)(-5,-1)
\put(3,0){\circle*{2}}
\put(-0.65,0){$\vee$}
\put(6,7){\circle*{2}}
\put(0,7){\circle*{2}}
\put(0,14){\circle*{2}}
\put(0,7){\line(0,1){7}}
\end{picture}}
\newcommand{\tquatretrois}{\begin{picture}(15,18)(-5,-1)
\put(3,0){\circle*{2}}
\put(-0.65,0){$\vee$}
\put(6,7){\circle*{2}}
\put(0,7){\circle*{2}}
\put(6,14){\circle*{2}}
\put(6,7){\line(0,1){7}}
\end{picture}}
\newcommand{\tquatrequatre}{\begin{picture}(15,18)(-5,-1)
\put(3,5){\circle*{2}}
\put(-0.65,5){$\vee$}
\put(6,12){\circle*{2}}
\put(0,12){\circle*{2}}
\put(3,0){\circle*{2}}
\put(3,0){\line(0,1){5}}
\end{picture}}
\newcommand{\tquatrecinq}{\begin{picture}(9,19)(-2,-1)
\put(0,0){\circle*{2}}
\put(0,0){\line(0,1){5}}
\put(0,5){\circle*{2}}
\put(0,5){\line(0,1){5}}
\put(0,10){\circle*{2}}
\put(0,10){\line(0,1){5}}
\put(0,15){\circle*{2}}
\end{picture}}
\newcommand{\tcinqdeux}{\begin{picture}(15,14)(-5,-1)
\put(3,0){\circle*{2}}
\put(-0.65,0){$\vee$}
\put(6,7){\circle*{2}}
\put(0,7){\circle*{2}}
\put(3,7){\circle*{2}}
\put(3,0){\line(0,1){7}}
\put(0,7){\line(0,1){7}}
\put(0,14){\circle*{2}}
\end{picture}}
\newcommand{\tdun}[1]
{\begin{picture}(10,5)(-2,-1)
\put(0,0){\circle*{2}}
\put(3,-2){\tiny #1}
\end{picture}}
\newcommand{\tddeux}[2]{\begin{picture}(12,5)(0,-1)
\put(3,0){\circle*{2}}
\put(3,0){\line(0,1){5}}
\put(3,5){\circle*{2}}
\put(6,-3){\tiny #1}
\put(6,3){\tiny #2}
\end{picture}}
\newcommand{\tdtroisun}[3]{\begin{picture}(20,12)(-5,-1)
\put(3,0){\circle*{2}}
\put(-0.65,0){$\vee$}
\put(6,7){\circle*{2}}
\put(0,7){\circle*{2}}
\put(5,-2){\tiny #1}
\put(8,5){\tiny #2}
\put(-6,5){\tiny #3}
\end{picture}}
\newcommand{\tdquatredeux}[4]{\begin{picture}(20,20)(-5,-1)
\put(3,0){\circle*{2}}
\put(-.65,0){$\vee$}
\put(6,7){\circle*{2}}
\put(0,7){\circle*{2}}
\put(0,14){\circle*{2}}
\put(0,7){\line(0,1){7}}
\put(5,-2){\tiny #1}
\put(9,5){\tiny #2}
\put(-6,5){\tiny #3}
\put(-6,12){\tiny #4}
\end{picture}}
\newcommand{\tdquatretrois}[4]{\begin{picture}(20,20)(-5,-1)
\put(3,0){\circle*{2}}
\put(-.65,0){$\vee$}
\put(6,7){\circle*{2}}
\put(0,7){\circle*{2}}
\put(6,14){\circle*{2}}
\put(6,7){\line(0,1){7}}
\put(5,-2){\tiny #1}
\put(8,5){\tiny #2}
\put(-6,5){\tiny #4}
\put(8,12){\tiny #3}
\end{picture}}
\begin{document}

\title[Weighted infinitesimal unitary bialgebras on rooted forests and weighted cocycles]{Weighted infinitesimal unitary bialgebras on rooted forests and weighted cocycles}
%
\author{Yi Zhang}
\address{School of Mathematics and Statistics, Lanzhou University, Lanzhou, Gansu 730000, P.\,R. China}
         \email{zhangy2016@lzu.edu.cn}

\author{Dan Chen}
\address{School of Mathematics and Statistics, Lanzhou University, Lanzhou, Gansu 730000, P.\,R. China}
         \email{chendan@lzu.edu.cn}

\author{Xing Gao$^{*}$}
\footnotetext{* Corresponding author.}
\address{School of Mathematics and Statistics, Key Laboratory of Applied Mathematics and Complex Systems, Lanzhou University, Lanzhou, Gansu 730000, P.\,R. China}
         \email{gaoxing@lzu.edu.cn}

\author{Yan-Feng Luo} \address{School of Mathematics and Statistics,
Key Laboratory of Applied Mathematics and Complex Systems,
Lanzhou University, Lanzhou, Gansu 730000, P.\,R. China}
         \email{luoyf@lzu.edu.cn}

\date{\today}
\begin{abstract}
In this paper, we define a new coproduct on the space of decorated planar rooted forests to equip it with a weighted infinitesimal unitary bialgebraic structure.
We introduce the concept of $\Omega$-cocycle infinitesimal bialgebras of weight $\lambda$ and then prove that the space of decorated planar rooted forests $H_{\mathrm{RT}}(X,\Omega)$, together with a set of grafting operations $\{ B^+_\omega \mid \omega\in \Omega\}$,
is the free $\Omega$-cocycle infinitesimal unitary bialgebra of weight $\lambda$
on a set $X$, involving a weighted version of a Hochschild 1-cocycle condition.
As an application, we equip a free cocycle infinitesimal unitary bialgebraic
structure on the undecorated planar rooted forests,
which is the object studied in the well-known (noncommutative) Connes-Kreimer Hopf algebra.
\end{abstract}

\subjclass[2010]{
16W99, 
16S10, 
16T10, 
16T30,  
81R10,  
}

\keywords{Rooted forest; Infinitesimal bialgebra; Cocycle condition; Operated algebra}

\maketitle

\tableofcontents

\setcounter{section}{0}

\allowdisplaybreaks

\section{Introduction}
An infinitesimal bialgebra is a module $A$ which is simultaneously an algebra (possibly without a unit) and a coalgebra (possibly without a counit) such that the coproduct $\Delta$ is a derivation of $A$ in the sense that
\begin{equation}
\Delta(ab)=a\cdot\Delta(b)+\Delta(a)\cdot b\quad \text{ for } a, b\in A.
\mlabel{eq:comp1}
\end{equation}
When an infinitesimal bialgebra has an antipode, it will be called an infinitesimal Hopf algebra.
Infinitesimal bialgebras, first introduced by Joni and Rota~\mcite{JR}, are in order to give an
algebraic framework for the calculus of Newton divided differences. The basic theory of infinitesimal bialgebras and infinitesimal Hopf algebras was developed in~\mcite{MA, Agu01, Aguu02}.
Furthermore, infinitesimal bialgebras are also closely related to associative Yang-Baxter equations, Drinfeld's doubles, pre-Lie algebras and Drinfeld's Lie bialgebras~\mcite{Agu01}.
Recently, Wang~\mcite{WW14} generalized  Aguiar's results by studying the Drinfeld's double for braided infinitesimal Hopf algebras in Yetter-Drinfeld categories.
Another different version of infinitesimal bialgebras and infinitesimal Hopf algebras was defined by Loday and Ronco~\mcite{LR06}
and further studied by Foissy~\mcite{Foi09, Foi10}, in the sense that
\begin{equation}
\Delta(ab)=a\cdot\Delta(b)+\Delta(a)\cdot b-a\ot b\quad\text{ for } a, b\in A.
\mlabel{eq:comp2}
\end{equation}

In 2010, the relationship between classical rime solutions of the Yang-Baxter equation and B$\acute{\mathrm{e}}$zout operators was investigated by Ogievetsky and Popov~\mcite{OP10}, who turned the associative Yang-Baxter equation~\mcite{BGN1, BGN2} into a general structure, called non-homogenous associative classical Yang-Baxter equation.
Surprisingly, in the spirit of the well-known fact that a solution of the associative Yang-Baxter equation gives an infinitesimal bialgebra~\mcite{MA},
Ogievetsky and Popov~\mcite{OP10} clarified an algebraic meaning of the non-homogenous associative classical Yang-Baxter equation,
involving a coproduct given by
\begin{equation}
\Delta_{r}(a) :=a\cdot r- r\cdot a-\lambda (a\ot 1) \quad \text{ for }\, a\in A.
\mlabel{eq:id1}
\end{equation}
Here $\lambda\in \bfk$ and $r\in A\ot A$ is a solution of the non-homogenous associative classical Yang-Baxter equation.
Note that~\mcite{OP10} Eq.~(\mref{eq:id1}) satisfies
\begin{equation}
\Delta_{r}(ab)=a\cdot\Delta_{r}(b)+\Delta_{r}(a)\cdot b+\lambda (a\ot b)\quad \text{ for } a, b\in A,
\mlabel{eq:id2}
\end{equation}
which is precisely a uniform version of  the two compatibilities--Eqs~(\mref{eq:comp1}) and~(\mref{eq:comp2}).
Such an algebraic structure was called an infinitesimal unitary bialgebra of weight $\lambda$ in~\mcite{GZ, ZGZ18}.
We would like to point out that weighted infinitesimal unitary bialgebras have a close connection with pre-Lie algebras.
For example, Aguiar~\mcite{Aguu02} constructed a pre-Lie algebra from an infinitesimal bialgebra of weight zero. Motivated by Aguiar's construction,  a pre-Lie algebra from a weighted infinitesimal unitary bialgebra was derived in~\mcite{GZ}.

The rooted forest is a significant object studied in algebra and combinatorics.
One of the most important examples is the Connes-Kreimer Hopf algebra of rooted forests, which was introduced and studied extensively
in~\mcite{CK98, GL89, Hof03, Kre98, Moe01}.
In particular, the Connes-Kreimer Hopf algebra serves as a ``baby model" of Feynmann diagrams in the algebraic approach of
the renormalization in quantum field theory~\mcite{BF03, BF10, CK1, GPZ11, GPZ1, GZ08}.
It is also related to many other Hopf algebras built on rooted forests, such as Foissy-Holtkamp~\mcite{Foi02,Fois02,Hol03}, Grossman-Larson~\mcite{GL89} and Loday-Ronco~\mcite{LR98}.
One reason for the significance of these algebraic structures on rooted forests is that most of them possess universal properties, involving a Hochschild 1-cocycle, which have interesting applications in renormalization.
For example, the Connes-Kreimer Hopf algebra of rooted forests
inherits its algebra structure from the initial object in the category of (commutative) algebras with a linear operator~\mcite{Foi02,Moe01}.
Recently this universal property of rooted forests was generalized in~\mcite{ZGG16} in terms of decorated planar rooted forests,
and the universal property of Loday-Ronco Hopf algebra was investigated in~\mcite{ZG18} in terms of decorated planar binary trees.

The concept of algebras with (one or more) linear operators  was first introduced by Kurosh~\mcite{Kur60} but forgotten until it was rediscovered by Guo~\mcite{Guo09}, who constructed the free objects of such algebras in terms of various combinatorial objects, such as Motzkin paths, rooted forests and bracketed words. There such structure was called an $\Omega$-operated algebra, where $\Omega$ is a set to index the linear operators.
See also~\cite{BCQ10, GG, Gub, GSZ}.
It has been observed that the decorated planar rooted forests $H_{\mathrm{RT}}(\Omega)$ whose vertices are decorated by a set $\Omega$,
together with a set of grafting operations $\{B^+_\omega\mid \omega\in \Omega\})$, is a free object on the empty set in  the category of $\Omega$-operated algebras~\mcite{KP, ZGG16}.
Particularly, the noncommutative Connes-Kreimer Hopf algebra $H_{\mathrm{RT}}$ of planar rooted forests equipped with the grafting operation $B^+$ is a free operated algebra~\mcite{Guo09}.

As a related result, an infinitesimal unitary bialgebra of weight zero on rooted forests has been established in~\mcite{GW}. Using an infinitesimal version of the Hochschild 1-cocycle condition, they showed that  the space of decorated planar rooted forests is the free cocycle infinitesimal unitary bialgebra of weight zero. However, this infinitesimal 1-cocycle condition is not a real Hochschild 1-cocycle condition.
It is almost a natural question to wonder whether we can construct an infinitesimal (unitary) bialgebra of weight $\lambda$
on decorated rooted forests, by using a Hochschild 1-cocycle condition.
The present paper gives a positive answer to this question.
Namely, we first propose the concept of weighted $\Omega$-cocycle infinitesimal unitary bialgebras,
involving a weighted version of a Hochschild 1-cocycle condition.
Then we prove that the space of decorated planar rooted forests $H_{\mathrm{RT}}(X, \Omega)$ is the free objects in this category,
provided suitable operations are equipped. This freeness characterization of decorated planar rooted forests gives an algebraic explanation of the fundamental roles played by these combinatorial objects.

{\bf Structure of the Paper.}
In Section~\mref{sec:ibw}, we recall the concept of a weighted infinitesimal (unitary) bialgebra and show that some well-known algebras possess a weighted infinitesimal (unitary) bialgebra.

In Section~\mref{sec:infbi}, after summarizing concepts and basic facts on decorated rooted forests, we construct a new coproduct by a weighted version of a Hochschild 1-cocycle condition (Eq.~(\mref{eq:cdbp})) on decorated planar rooted forests $H_{\mathrm{RT}}(X,\Omega)$ to equip it with a new coalgebra structrue (Theorem~\mref{thm:rt1}).
Further $H_{\mathrm{RT}}(X,\Omega)$ can be turned into an infinitesimal unitary bialgebra of weight $\lambda$ with respect to the concatenation product and the empty tree as its unit (Theorem~\mref{thm:rt2}).

In Section~\mref{sec:ope}, under the framework of operated algebras,
we propose the concept of weighted $\Omega$-cocycle infinitesimal unitary bialgebras~(Definition~\mref{defn:xcobi}~(\mref{it:def3})), involving a weighted 1-cocycle condition. Having this concept in hand, we prove that $H_{\RT}(X,\Omega)$ is the free $\Omega$-cocycle infinitesimal unitary bialgebra of weight $\lambda$ on a set $X$ (Theorem~\mref{thm:propm}). As an application, we obtain that the undecorated planar rooted forests is the free cocycle infinitesimal unitary bialgebra of weight $\lambda$ on the empty set (Corollary~\mref{coro:rt16}).


{\bf Notation.}
Throughout this paper, let $\bfk$ be a unitary commutative ring unless the contrary is specified,
which will be the base ring of all modules, algebras, coalgebras, bialgebras, tensor products, as well as linear maps.
By an algebra we mean an associative algebra (possibly without unit)
and by a coalgebra we mean a coassociative coalgebra (possibly without counit).
We use the Sweedler notation:$$\Delta(a) = \sum_{(a)} a_{(1)} \ot a_{(2)}.$$
For a set $Y$, denote by $M(Y)$ and $S(Y)$ the free monoid and semigroup on $Y$, respectively.
For an algebra $A$, $A\ot A$ is viewed as an $(A,A)$-bimodule in the standard way
\begin{equation}
a\cdot(b\otimes c):=ab\otimes c\,\text{ and }\, (b\otimes c)\cdot a:= b\otimes ca,
\mlabel{eq:dota}
\end{equation}
where $a,b,c\in A$.

\section{Weighted infinitesimal  unitary bialgebras and examples}\label{sec:ibw}
In this section, we recall the concept of a weighted infinitesimal unitary bialgebra~\mcite{GZ, OP10}, which generalize simultaneously the one introduced by Joni and Rota~\mcite{JR} and the one initiated by Loday and Ronco~\mcite{LR06}.
Based on the mixture of Eqs.~(\mref{eq:comp1}) and~(\mref{eq:comp2}) into Eq.~(\mref{eq:id2}) by Ogievetsky and Popov~\mcite{OP10}, we
propose

\begin{defn}\mcite{GZ}\mlabel{def:iub}
Let $\lambda$ be a given element of $\bfk$.
An {\bf infinitesimal bialgebra} (abbreviated {\bf $\epsilon$-bialgebra}) {\bf of weight $\lambda$} is a triple $(A,m,\Delta)$
consisting of an algebra $(A,m)$ (possibly without a unit) and a coalgebra $(A,\Delta)$ (possibly without a counit) that satisfies
\begin{equation}
\Delta (ab)=a\cdot \Delta(b)+\Delta(a) \cdot b+\lambda (a\ot b)\quad \text{ for } a, b\in A.
\mlabel{eq:cocycle}
\end{equation}
If further $(A,m,1)$ is a unitary algebra, then the quadruple $(A,m,1, \Delta)$ is called an {\bf infinitesimal unitary bialgebra} {\bf of weight $\lambda$}.
\end{defn}

\begin{defn}\mcite{GZ}
Let $A$ and  $B$ be two $\epsilon$-bialgebras of weight $\lambda$.
A map $\phi : A\rightarrow B$ is called an {\bf infinitesimal bialgebra morphism} if $\phi$ is an algebra morphism and a coalgebra morphism.
\end{defn}

We shall use the infix notation $\epsilon$- interchangeably with the adjective ``infinitesimal" throughout the rest of this paper.

\begin{remark}\label{remk:4rem}
\begin{enumerate}
\item \label{remk:units}Let $(A,m,1, \Delta)$ be an $\epsilon$-bialgebra of weight $\lambda$. Then $\Delta(1)=-\lambda(1\ot1)$, as
    \begin{align*}
    \Delta(1)=\Delta(1\cdot1)=1 \cdot \Delta(1) +\Delta(1)\cdot 1 +\lambda (1\ot 1)=2\Delta(1)+\lambda (1\ot 1).
    \end{align*}
\item  The $\epsilon$-bialgebra introduced by Joni and Rota~\mcite{JR} is the $\epsilon$-bialgebra of weight zero,
and the $\epsilon$-bialgebra originated from Loday and Ronco~\mcite{LR06} is the $\epsilon$-bialgebra of weight $-1$.

\item Twenty years after Joni and Rota~\mcite{JR}, Aguiar~\mcite{MA} introduced the concept of an $\epsilon$-Hopf algebra
and pointed out that there is no non-zero $\epsilon$-bialgebra which is both unitary and counitary when $\lambda=0$. Indeed, it follows the counicity that
$$1\ot 1_{\bfk}=(\id \ot \epsilon)\Delta(1)=0$$ and so $1=0$.

\item Let $(A, \mu, \Delta)$ be an $\epsilon$-unitary bialgebra of weight $\lambda$. Denote by
\begin{align}
\rhd: A\ot A \to A, \, a\ot b \mapsto a\rhd b:=\sum_{(b)}b_{(1)}a b_{(2)},
\mlabel{eq:preope}
\end{align}
where $b_{(1)}$ and $b_{(2)}$ are from the Sweedler notation $\Delta (b)=\sum_{(b)}b_{(1)}\ot b_{(2)}$.
Then $A$ equipped with the $\rhd$ in Eq.~(\mref{eq:preope}) is a pre-Lie algebra~\cite{GZ}.
\end{enumerate}
\end{remark}

Some well-known algebras possess weighted infinitesimal bialgebraic structures, via constructions of suitable coproducts.

\begin{exam}\label{exam:bialgebras}
Here are some examples of weighted $\epsilon$-bialgebras.
\begin{enumerate}
\item Any  algebra $( A, m)$ is an $\epsilon$-bialgebra of weight zero when the coproduct is taken to be $\Delta=0$.

\item \cite[Example~2.3.2]{MA} A quiver $Q=(Q_0,Q_1, s,t)$ is a quadruple consisting of a set $Q_0$ of vertices, a set $Q_1$ of arrows, and two maps $s,t :Q_1\rightarrow Q_0$ which associate  each arrow $a\in Q_1$ to its source $s(a)\in Q_0$ and its target $t(a)\in Q_0$.  The path algebra $\bfk Q$ can be turned into an $\epsilon$-unitary bialgebra of weight zero with the coproduct $\Delta$ given by:
    $$\Delta(a_1\cdots a_n):= \left\{
    \begin{array}{ll}
    0 & \text{ if } n = 0,\\
   s(a_1)\ot t(a_1)& \text{ if } n=1,\\
    s(a_{1})\ot a_2 \cdot\cdots a_n+a_1\cdots a_{n-1}\ot t(a_n) \\
+\sum_{i=1}^{n-2}a_1\cdots a_i\ot a_{i+2}\cdots a_n & \text{ if } n\geq 2,
    \end{array}
    \right.$$
 where $a_1\cdots a_n$ is a path in $\bfk Q$. Here we use the convention that $a_1\cdots a_n\in Q_0$ when $n=0$.
 \item \cite[Section~1.4]{Foi08} Let $(A, m, 1,\Delta_c, \varepsilon, c)$ be a braided bialgebra with $A = \bfk\oplus \ker\varepsilon$ and the braiding
$c:A\ot A \to A\ot A$ given by
\begin{align*}
c:\left\{
    \begin{array}{ll}
    1\ot 1 \mapsto 1\ot 1,\\
   a\ot 1\mapsto 1\ot a, \\
    1\ot b \mapsto b\ot 1,\\
    a\ot b \mapsto 0, \quad \text{ where } a,b\in \ker \varepsilon.
    \end{array}
    \right.
\end{align*}
Then $(A, m, 1,\Delta_c)$ is an $\epsilon$-unitary bialgebra of weight $-1$.
\end{enumerate}
\end{exam}

\section{Weighted infinitesimal unitary bialgebras of decorated planar rooted forests}
\label{sec:infbi}
In this section, we first show a general way to decorate planar rooted forests that generalizes the constructions of decorated rooted forests introduced and studied in~\mcite{Foi02, Guo09, Sta97}. Using a weighted 1-cocycle condition, we then define a coproduct on the space of new decorated planar rooted forests to equip it with a coalgebraic structure, with an eye toward constructing a weighted infinitesimal unitary bialgebra on it.

\subsection{New decorated planar rooted forests}
A $\mathbf{rooted\ tree}$ is a finite graph, connected and without cycles, with a distinguished vertex called the $\mathbf{root}$. A $\mathbf{planar\ rooted\ tree}$  is a rooted tree with a fixed embedding into the plane.
The first few planar rooted trees are listed below:
$$\tun,\ \tdeux,\ \ttroisun,\ \tquatreun,\ \ttroisdeux,\  \tquatretrois,\ \tquatredeux,\ \tquatrequatre,\ \tcinqdeux,\ \tquatrecinq,$$
where the root of a tree is on the bottom.
Let $\calt$ denote the set of planar rooted trees and $M(\calt)$ the free monoid generated by $\calt$ with the concatenation product, denoted by $\mul$ and usually suppressed. The empty tree in $M(\calt)$ is denoted by $\etree$.
A {\bf planar rooted forest} is a noncommutative concatenation of planar rooted trees, denoted by $F=T_1\cdots T_n$ with $T_1, \ldots, T_n \in \calt$. Here we use the convention that $F=\etree$ when $n=0$. The first few planar rooted forests are listed below:
$$\etree,\ \,\tun,\ \,\tun\tun,\ \,\tdeux,\ \, \tdeux\tun,\ \,\tun \tdeux,\ \, \tun\ttroisun,\ \,\tdeux\tun\tun,\ \,\tun \tdeux \tun,\ \,\ttroisun\ttroisdeux.$$

Let $\Omega$ be a nonempty set, and let $X$ be a set whose elements are not in the set $\Omega$.
For the nonempty set $X\sqcup \Omega$, let
$\calt(X\sqcup \Omega)$ (resp.~$\calf(X\sqcup \Omega):=M(\calt(X\sqcup \Omega))$) denote the set of planar rooted trees (resp.~forests)
whose vertices (leaf vertices and internal vertices) are decorated by elements of $X\sqcup \Omega$. Define $H_{\mathrm{RT}}(X\sqcup \Omega):=\bfk \calf(X\sqcup \Omega)$ to be the free $\bfk$-module spanned by $\calf(X\sqcup \Omega)$.

Let $\rts$ (resp.~$\rfs$) denote the subset of $\calt(X\sqcup \Omega)$ (resp.~$\calf(X\sqcup \Omega)$) consisting of vertex decorated planar rooted trees (resp. forests) whose internal vertices are decorated by elements of $\Omega$ exclusively and  leaf vertices are decorated by elements of $X\sqcup \Omega$. In other words, all internal vertices, as well as possibly some of the leaf vertices, are decorated by $\Omega$. The only vertex of the tree $\bullet$ is taken to be a leaf vertex.
The following are some decorated planar rooted trees in $\rts$:
$$\tdun{$\alpha$},\ \, \tdun{$x$},\ \, \tddeux{$\alpha$}{$\beta$},\ \,  \tddeux{$\alpha$}{$x$}, \ \, \tdtroisun{$\alpha$}{$\beta$}{$\gamma$},\ \,\tdtroisun{$\alpha$}{$x$}{$\gamma$}, \ \,\tdtroisun{$\alpha$}{$x$}{$y$}, \ \, \tdquatretrois{$\alpha$}{$\beta$}{$\gamma$}{$\beta$},\ \, \tdquatretrois{$\alpha$}{$\beta$}{$\gamma$}{$x$}, \ \, \tdquatretrois{$\alpha$}{$\beta$}{$x$}{$y$},$$
with $\alpha,\beta,\gamma\in \Omega$ and $x, y \in X$.
Define
\begin{align*}
\hrts:= \bfk \rfs=\bfk M(\rts)
\end{align*}
to be the free $\bfk$-module spanned by $\rfs$.
For each  $\omega\in \Omega$, define
$$B^+_\omega:\hrts\to \hrts$$
to be the linear grafting operation by taking $\etree$ to $\bullet_\omega$ and sending a rooted forest in $\hrts$ to its grafting with the new root decorated by $\omega$. For example,
$$B_{\omega}^{+}(\etree)=\tdun{$\omega$} \ ,\ \
 B_{\omega}^{+}(\tdun{$x$} \tdun{$y$})=\tdtroisun{$\omega$}{$y$}{$x$}\ , \ \
B_{\omega}^{+}(\tdun{$x$}\tddeux{$\alpha$}{$y$})=\tdquatretrois{$\omega$}{$\alpha$}{$y$}{$x$},\ \  B_{\omega}^{+}(\tddeux{$\beta$}{$\alpha$}\tdun{$x$})=
\tdquatredeux{$\omega$}{$x$}{$\beta$}{$\alpha$},$$
where $\alpha, \beta, \omega\in \Omega$ and $x, y \in X$. Note that $\hrts$ is closed under the concatenation $\mul$.

\begin{remark}\label{re:3ex}
Here are some special cases of our decorated planar rooted forests.
\begin{enumerate}
\item If $X=\emptyset$ and $\Omega$ is a singleton set, then all decorated planar rooted forests in $\mathcal{F}(X, \Omega)$ have the same decoration, which is the object studied in the well-known Foissy-Holtkamp Hopf algebra---the noncommutative version of Connes-Kreimer Hopf algebra~\mcite{Foi02, Hol03}.

\item If $X = \emptyset$, then $\mathcal{F}(X, \Omega)$ was studied by Foissy~\mcite{Foi02,Fois02}, in which a decorated noncommutative version of Connes-Kreimer Hopf algebra was constructed. \mlabel{it:2ex}

\item If $\Omega$ is a singleton set, then $\rfs$ was introduced and studied in~\mcite{ZGG16} to construct a cocycle Hopf algebra on decorated planar rooted forests.
\item The rooted forests in $\rfs$ with leaf vertices decorated by elements of $X$ and internal vertices decorated by elements of $\Omega$ were introduced in~\mcite{Guo09}. However, this decoration can't deal with the unity and the algebraic structures on this decorated rooted forests are all nonunitary. The distinction between unitary and nonunitary for $\rfs$ is more significant than for an associative algebra, because of the involvement of the grafting operation.
\end{enumerate}
\end{remark}

The following are two basic definitions that will be used in the remainder of the paper. See \mcite{GZ, Guo09} for detailed discussions.
For $F=T_1\cdots T_{n}\in \rfs$ with $n\geq 0$ and $T_1,\cdots,T_{n}\in \rts$, we define $\bre(F):=n$
to be the {\bf breadth} of $F$. Here we use the convention that $\bre(\etree) = 0$ when $n=0$.
In order to define the depth of a decorated planar rooted forests, we build a recursive structure on $\rfs$.
Define $\bullet_{X}:=\{\bullet_{x}\mid x\in X\}$ and set
\begin{align*}
\calf_0:=M(\bullet_{X})=S(\bullet_{X})\sqcup \{\etree\},
\end{align*}
where $M(\bullet_{X})$ (resp.~$S(\bullet_{X})$) is the submonoid (resp.~subsemigroup) of $\rfs$ generated by $\bullet_{X}$.
Here we are abusing notion slightly since $M(\bullet_{X})$ (resp.~$S(\bullet_{X})$) is also isomorphic to the free monoid
(resp. semigroup) generated by $\bullet_{X}$.
Suppose that $\calf_n$ has been defined for an $n\geq 0$, then define
\begin{align*}
\calf_{n+1}:=M(\bullet_{X}\sqcup (\sqcup_{\omega\in \Omega} B_{\omega}^{+}(\calf_n))).
\end{align*}
Thus we obtain $\calf_n\subseteq \calf_{n+1}$ and can define
\begin{align}
\rfs := \lim_{\longrightarrow} \calf_n=\bigcup_{n=0}^{\infty}\calf_n.
\mlabel{eq:rdeff}
\end{align}
Now elements $F\in \calf_n\setminus \calf_{n-1}$ are said to have {\bf depth} $n$, denoted by $\dep(F) = n$.
For example,
\begin{align*}
\dep(\etree) =&\ \dep(\bullet_x) =0,\ \dep(\bullet_\omega)=\dep(B^+_{\omega}(\etree)) = 1,\  \dep(\tddeux{$\omega$}{$y$})=
\dep(B^+_{\omega}(\bullet_y)) =1, \\
\dep&(\tdun{$x$}\tddeux{$\omega$}{$y$})=
\dep(\tdun{$x$}B^+_{\omega}(\bullet_y)) =1,\, \dep(~\tdtroisun{$\omega$}{$x$}{$\alpha$}) = \dep(B^+_{\omega}(B^+_{\alpha}(\etree) \bullet_x)) = 2,
\end{align*}
where $\alpha,\omega\in \Omega$ and $x, y \in X$.

\subsection{Cartier-Quillen cohomology}
Given an algebra $A$  and a bimodule $M$ over $A$. Let $H^{*}(A, M)$ denote the {\bf Hochschild cohomology} of $A$ with coefficients in $M$ which was defined from a complex with maps $A^{\ot n}\rightarrow M$ as cochains, see~\mcite{Lod92} for more details. Let $(C, \Delta)$ be a coalgebra and $(B, \delta_{G}, \delta_{D})$ be a bicomodule over $C$. The {\bf Cartier-Quillen cohomology} of $C$ with coefficients in $B$ is a dual notation of the Hochschild cohomology. Explicitly, it is a cohomology of the complex $\mathrm{Hom}_{\mathbf{k}}(B, C^{\ot n})$ with the maps $b_n: \mathrm{Hom}_{\mathbf{k}}(B, C^{\ot n})\rightarrow \mathrm{Hom}_{\mathbf{k}}(B, C^{\ot (n+1)})$ given by
\begin{align*}
b_n(L)=(\id \ot L)\circ \delta_{G}+\sum_{i=1}^{n}(-1)^i(\id _{C}^{\ot{(i-1)}}\ot \Delta \ot \id _{C}^{\ot{(n-i)}})L+(-1)^{n+1}(L\ot \id)\circ \delta_D,
\end{align*}
where $L: B\rightarrow C^{\ot n}$. In particular,  a linear map $L:B\rightarrow C$ is the {\bf $1$-cocycle} for this cohomology precisely when it satisfies the following condition:
\begin{align*}
\Delta\circ L= (L\ot \id)\circ \delta_D + (\id \ot L)\circ \delta_{G},
\end{align*}
see~\mcite{Fo3, Moe01} for more details. Let $e$ be a {\bf group-like element of weight $\lambda$} of $C$, that is, $\Delta(e)=\lambda (e\ot e)$. We consider the bicomodule $(C, \Delta, \delta_D)$ with $\delta_{D}(x)=\lambda (x\ot e)$ for any $x\in C$. Then the $1$-cocycle $L$
is a linear endomorphism of $C$ satisfying
\begin{align}
\Delta \circ L(x)= L(x)\ot \lambda e+(\id \ot L)\circ \Delta(x) \quad \text{ for } x\in C.
\mlabel{eq:wcocycle}
\end{align}
We call Eq.~(\mref{eq:wcocycle}) the {\bf 1-cocycle condition of weight $\lambda$}.

\begin{remark}
\begin{enumerate}
\item The group like elements in infinitesimal unitary bialgebras always exist. Indeed, the unit of an infinitesimal unitary bialgebra is a group like element of weight $-\lambda$.

\item When $L=B^+$ and $\lambda=1$, the weighted 1-cocycle condition in~Eq.~(\mref{eq:wcocycle}) is
\begin{equation}
\Delta_{\RT} (F)=\Delta_{\RT} B^{+}(\lbar{F})= F \otimes \etree + (\id\otimes B^{+})\Delta_{\RT}(\lbar{F})
\quad \text{ for } F=B^{+}(\lbar{F})\in \mathcal{F},
\mlabel{eq:usuco}
\end{equation}
which is the usual 1-cocycle condition employed in~\mcite{CK98,Fo3,ZGG16}.
Here the empty tree $\etree$ is the unique group like element in the Connes-Kreimer Hopf algebra.
\end{enumerate}
\end{remark}

\subsection{Weighted infinitesimal unitary bialgebras on decorated planar rooted forests}
In this subsection, we shall equip a weighted infinitesimal unitary bialgebraic structure on decorated planar rooted forests.

Let us define a new coproduct $\col$ on $\hrts$ by induction on depth.
By linearity, we only need to define $\col(F)$ for basis elements $F\in \rfs$.
For the initial step of $\dep(F)=0$, we  define
\begin{equation}
\col(F) :=
\left\{
\begin{array}{ll}
-\lambda (\etree \ot \etree) & \text{ if } F = \etree, \\
\bullet_{x}\ot  \bullet_{x}& \text{ if } F = \bullet_x \text{ for some } x \in X,\\
\bullet_{x_{1}}\cdot \col(\bullet_{x_{2}}\cdots\bullet_{x_{m}})+ \col(\bullet_{x_{1}}) \cdot (\bullet_{x_{2}}\cdots\bullet_{x_{m}})\\
+\lambda\bullet_{x_{1}}\ot \bullet_{x_{2}}\cdots\bullet_{x_{m}}
& \text{ if }  F=\bullet_{x_{1}}\cdots \bullet_{x_{m}} \text{ with } m\geq 2.
\end{array}
\right.
 \mlabel{eq:dele}
\end{equation}
Here in the third case, the definition of $\col$ reduces to the induction on breadth and the dot action is defined in Eq.~(\mref{eq:dota}).

For the induction step of $\dep(F)\geq 1$, we reduce the definition to the induction on breadth.
If $\bre(F) = 1$, we
write $F=B_{\omega}^{+}(\lbar{F})$ for some $\omega\in \Omega$ and $\lbar{F}\in \rfs$, and define
\begin{equation}
\col(F)=\col B_{\omega}^{+}(\lbar{F}) :=  F \otimes (-\lambda\etree) + (\id\otimes B_{\omega}^{+})\col(\lbar{F}).
\mlabel{eq:dbp}
\end{equation}
In other words
\begin{align}
\col B_{\omega}^{+}=  B_{\omega}^{+} \otimes (-\lambda\etree) + (\id\otimes B_{\omega}^{+})\col.
\mlabel{eq:cdbp}
\end{align}
If $\bre(F) \geq 2$, we write $F=T_{1}T_{2}\cdots T_{m}$ with $m\geq 2$ and $T_1, \ldots, T_m \in \rts$, and define
\begin{equation}
\col(F)=T_{1}\cdot \col(T_{2}\cdots T_{m})+\col(T_{1})\cdot (T_{2}\cdots T_{m})+\lambda T_{1} \ot T_{2}\cdots T_{m}.
\mlabel{eq:dele1}
\end{equation}

\begin{remark}
By Remark~\mref{remk:4rem}~(\mref{remk:units}), the empty tree $\etree\in \hrts$  is a group like element of weight $-\lambda$,
and so Eq.~(\mref{eq:cdbp}) is the 1-cocycle condition of weight $-\lambda$.
\end{remark}

\begin{exam}\mlabel{exam:cop}
Let $x,y \in X$ and $\alpha, \beta, \gamma \in \Omega$. Then
\begin{align*}
\col(\tdun{$\alpha$})&=\col(B_{\alpha}^{+}(\etree))=-\lambda(\etree\ot \tdun{$\alpha$}+\tdun{$\alpha$}\ot \etree), \\
\col(\tddeux{$\alpha$}{$x$})&=\col(B_{\alpha}^{+}(\tdun{$x$}))=-\lambda(\tddeux{$\alpha$}{$x$}\ot \etree) +\tdun{$x$}\ot \tddeux{$\alpha$}{$x$},\\
\col(\tddeux{$\alpha$}{$\beta$})&=\col(B_{\alpha}^{+}(\tdun{$\beta$}))=-\lambda(\etree\ot \tddeux{$\alpha$}{$\beta$} +\tdun{$\beta$}\ot\tdun{$\alpha$}+\tddeux{$\alpha$}{$\beta$}\ot \etree),\\
%
\col(\tdun{$\gamma$}\tdun{$y$})&=-\lambda(\etree \ot \tdun{$\gamma$}\tdun{$y$})+\tdun{$\gamma$}\tdun{$y$}\ot \tdun{$y$},\\
\col(\tddeux{$\alpha$}{$x$}\tdun{$y$})&=\tddeux{$\alpha$}{$x$}\tdun{$y$}\ot \tdun{$y$}+\tdun{$x$}\ot \tddeux{$\alpha$}{$x$}\tdun{$y$},\\
\col(\tdun{$x$}\tddeux{$\beta$}{$\alpha$})&=-\lambda (\tdun{$x$}\tddeux{$\beta$}{$\alpha$}\ot \etree+ \tdun{$x$}\tdun{$\alpha$}\ot \tdun{$\beta$})+\tdun{$x$}\ot \tdun{$x$}\tddeux{$\beta$}{$\alpha$} ,\\
\col(~ \tdtroisun{$\alpha$}{$x$}{$\beta$})&=\col(B_{\alpha}^{+}(\tdun{$\beta$} \tdun{$x$}))=-\lambda (\etree\ot \tdtroisun{$\alpha$}{$x$}{$\beta$}+\tdtroisun{$\alpha$}{$x$}{$\beta$}\ot \etree)+\tdun{$\beta$}\tdun{$x$}\ot \tddeux{$\alpha$}{$x$}.
\end{align*}
\end{exam}
To show $(\hrts, \col)$ is a coalgebra, we need the following two lemmas.

\begin{lemma}\label{lem:rt11}
Let $\bullet_{x_{1}}\cdots \bullet_{x_{m}}\in \hrts$ with $m\geq 1$ and $x_1, \ldots, x_m\in X$. Then
$$\col(\bullet_{x_{1}}\cdots \bullet_{x_{m}})=\sum_{i=1}^{m}\bullet_{x_{1}}\cdots\bullet_{x_{i}}\otimes\bullet_{x_{i}}\cdots\bullet_{x_{m}}
+\lambda\sum_{i=1}^{m-1}\bullet_{x_{1}}\cdots\bullet_{x_{i}}\otimes\bullet_{x_{i+1}}\cdots\bullet_{x_{m}}.$$
\end{lemma}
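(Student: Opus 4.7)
The natural strategy is induction on the breadth $m \geq 1$, using the recursive definition of $\col$ given in Eq.~(\ref{eq:dele}).

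For the base case $m = 1$, the second clause of Eq.~(\ref{eq:dele}) gives $\col(\bullet_{x_1}) = \bullet_{x_1} \otimes \bullet_{x_1}$, which matches the claimed formula (the first sum contributes the single term $\bullet_{x_1} \otimes \bullet_{x_1}$ and the second sum is empty).

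For the inductive step, I would assume the formula holds for $m-1 \geq 1$ and apply the third clause of Eq.~(\ref{eq:dele}):
\begin{align*}
\col(\bullet_{x_{1}}\cdots \bullet_{x_{m}}) = \bullet_{x_{1}}\cdot \col(\bullet_{x_{2}}\cdots\bullet_{x_{m}})
+ \col(\bullet_{x_{1}})\cdot (\bullet_{x_{2}}\cdots\bullet_{x_{m}})
+ \lambda\,\bullet_{x_{1}}\ot \bullet_{x_{2}}\cdots\bullet_{x_{m}}.
\end{align*}
The middle summand is $\bullet_{x_1}\otimes \bullet_{x_1}\bullet_{x_2}\cdots\bullet_{x_m}$, contributing the $i=1$ term of the first sum in the target formula. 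Substituting the inductive hypothesis into $\col(\bullet_{x_2}\cdots\bullet_{x_m})$ and applying the left dot-action from Eq.~(\ref{eq:dota}) converts each tensor $\bullet_{x_2}\cdots\bullet_{x_i} \otimes \bullet_{x_i}\cdots\bullet_{x_m}$ into $\bullet_{x_1}\bullet_{x_2}\cdots\bullet_{x_i} \otimes \bullet_{x_i}\cdots\bullet_{x_m}$, which gives exactly the $i=2,\dots,m$ terms of the first sum of the target. Similarly, the weighted part contributes the $i=2,\dots,m-1$ terms of the second sum. The remaining $\lambda\,\bullet_{x_1}\otimes \bullet_{x_2}\cdots\bullet_{x_m}$ summand supplies the missing $i=1$ term of the second sum. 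Collecting these yields the desired identity.

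This proof is essentially a routine bookkeeping argument; the only mild obstacle is keeping the two sums' indexing straight when dispatching the three pieces produced by the recursion, and verifying that the single ``boundary'' term $\bullet_{x_1}\otimes\bullet_{x_1}\bullet_{x_2}\cdots\bullet_{x_m}$ arising from $\col(\bullet_{x_1})\cdot(\bullet_{x_2}\cdots\bullet_{x_m})$ and the weighted boundary term $\lambda\bullet_{x_1}\otimes\bullet_{x_2}\cdots\bullet_{x_m}$ exactly fill in the $i=1$ slots of the two sums, respectively. No conceptual difficulty beyond that; the argument does not need the grafting recursion~(\ref{eq:dbp}) since all vertices here are leaves decorated by $X$.
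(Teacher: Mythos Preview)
Your proposal is correct and follows essentially the same approach as the paper: induction on $m$, with the base case from the second clause of Eq.~(\ref{eq:dele}) and the inductive step from the third clause, then identifying how each of the three pieces produced by the recursion fills in the appropriate index range of the two target sums. The paper's write-up is slightly more explicit in displaying each substitution, but the logic is identical.
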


\begin{proof}
We prove this result by induction on $m\geq 1$. For the initial step of $m=1$, we have
$$\col(\bullet_{x_{1}})=\bullet_{x_{1}} \ot \bullet_{x_{1}} ,$$
and the result is true trivially. For the induction step of $m\geq 2$, we get
\allowdisplaybreaks{
\begin{align*}
&\col(\bullet_{x_{1}}\cdots \bullet_{x_{m}}) = \bullet_{x_{1}} \cdot \col(\bullet_{x_{2}}\cdots\bullet_{x_{m}})+ \col(\bullet_{x_{1}})\cdot (\bullet_{x_{2}}\cdots\bullet_{x_{m}})+\lambda\bullet_{x_{1}}\ot \bullet_{x_{2}}\cdots\bullet_{x_{m}}  \quad (\text{by Eq.~(\ref{eq:dele})})\\
=& \bullet_{x_{1}} \cdot \col(\bullet_{x_{2}}\cdots\bullet_{x_{m}})+ (\bullet_{x_{1}}\ot \bullet_{x_{1}})\cdot (\bullet_{x_{2}}\cdots\bullet_{x_{m}})+\lambda\bullet_{x_{1}}\ot \bullet_{x_{2}}\cdots\bullet_{x_{m}} \quad (\text{by Eq.~(\ref{eq:dele})})\\
=&\bullet_{x_{1}} \cdot \col(\bullet_{x_{2}}\cdots\bullet_{x_{m}}) +\bullet_{x_{1}} \otimes\bullet_{x_{1}}\bullet_{x_{2}} \cdots\bullet_{x_{m}}+\lambda\bullet_{x_{1}}\ot \bullet_{x_{2}}\cdots\bullet_{x_{m}} \quad (\text{by Eq.~(\ref{eq:dota})})\\
=&\bullet_{x_{1}} \cdot \left( \sum_{i=2}^{m}\bullet_{x_{2}}\cdots\bullet_{x_{i}}\otimes\bullet_{x_{i}}\cdots\bullet_{x_{m}}+\lambda
\sum_{i=2}^{m-1}\bullet_{x_{2}}\cdots\bullet_{x_{i}}\otimes\bullet_{x_{i+1}}\cdots\bullet_{x_{m}}\right)
+\bullet_{x_{1}} \otimes\bullet_{x_{1}}\bullet_{x_{2}} \cdots\bullet_{x_{m}}\\
&+\lambda\bullet_{x_{1}}\ot \bullet_{x_{2}}\cdots\bullet_{x_{m}}
 \quad \quad \quad \quad \quad (\text{by the induction hypothesis})\\
=&  \sum_{i=2}^{m}\bullet_{x_{1}}\cdots\bullet_{x_{i}}\otimes\bullet_{x_{i}}\cdots\bullet_{x_{m}}
+\bullet_{x_{1}} \otimes\bullet_{x_{1}} \cdots\bullet_{x_{m}}
+\lambda\sum_{i=2}^{m-1}\bullet_{x_{1}}\cdots\bullet_{x_{i}}\otimes\bullet_{x_{i+1}}\cdots\bullet_{x_{m}}
+\lambda\bullet_{x_{1}}\ot \bullet_{x_{2}}\cdots\bullet_{x_{m}}\\
=&\sum_{i=1}^{m}\bullet_{x_{1}}\cdots\bullet_{x_{i}}\otimes\bullet_{x_{i}}\cdots\bullet_{x_{m}}
+\lambda\sum_{i=1}^{m-1}\bullet_{x_{1}}\cdots\bullet_{x_{i}}\otimes\bullet_{x_{i+1}}\cdots\bullet_{x_{m}},
\end{align*}
}
as required.
\end{proof}

\begin{lemma}\label{lem:colff}
Let $F_1, F_2\in \hrts$. Then
$$\col(F_1 F_2) = F_1 \cdot \col(F_2) + \col(F_1) \cdot F_2+\lambda (F_1\ot F_2).$$
\end{lemma}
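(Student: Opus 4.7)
\medskip

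\noindent\textbf{Proof plan for Lemma~\ref{lem:colff}.}

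By $\bfk$-bilinearity of both sides, it suffices to prove the identity for basis elements $F_1,F_2\in\rfs$. My plan is to induct on $\bre(F_1)$, treating the empty-tree cases separately so that the coproduct $\col(\etree)=-\lambda(\etree\ot\etree)$ can be used to absorb the $\lambda(F_1\ot F_2)$ term. First I would dispose of the boundary cases. If $F_1=\etree$, then $F_1F_2=F_2$, and the right-hand side equals $\etree\cdot\col(F_2)+\col(\etree)\cdot F_2+\lambda(\etree\ot F_2)=\col(F_2)-\lambda(\etree\ot F_2)+\lambda(\etree\ot F_2)=\col(F_2)$, using Eq.~(\ref{eq:dota}) and $\col(\etree)=-\lambda(\etree\ot\etree)$. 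The symmetric case $F_2=\etree$ is identical.

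With $F_1,F_2\neq\etree$ we have $\bre(F_1)\geq 1$ and $\bre(F_2)\geq 1$, so that $F_1F_2$ has breadth $\geq 2$. For the base case $\bre(F_1)=1$, write $F_1=T_1$ and $F_2=T_2\cdots T_m$ with $m\geq 2$; then $F_1F_2=T_1T_2\cdots T_m$ is a breadth-$\geq 2$ forest, and Eq.~(\ref{eq:dele1}) applied with this splitting gives precisely
\[
\col(F_1F_2)=T_1\cdot\col(T_2\cdots T_m)+\col(T_1)\cdot(T_2\cdots T_m)+\lambda(T_1\ot T_2\cdots T_m)=F_1\cdot\col(F_2)+\col(F_1)\cdot F_2+\lambda(F_1\ot F_2).
\]

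For the inductive step $\bre(F_1)\geq 2$, write $F_1=T_1 G$ with $T_1\in\rts$ and $\bre(G)=\bre(F_1)-1\geq 1$. Applying the base case to $T_1$ and the nonempty forest $GF_2$, then the induction hypothesis to both $\col(GF_2)$ and $\col(T_1G)=\col(F_1)$, I would compute
\begin{align*}
\col(F_1F_2)&=T_1\cdot\col(GF_2)+\col(T_1)\cdot(GF_2)+\lambda(T_1\ot GF_2)\\
&=T_1\cdot\bigl[G\cdot\col(F_2)+\col(G)\cdot F_2+\lambda(G\ot F_2)\bigr]+\col(T_1)\cdot(GF_2)+\lambda(T_1\ot GF_2).
\end{align*}
Using associativity of the bimodule actions in Eq.~(\ref{eq:dota}), regroup the terms as $(T_1G)\cdot\col(F_2)+\lambda(T_1G\ot F_2)$ plus a bracket that equals $\bigl[T_1\cdot\col(G)+\col(T_1)\cdot G+\lambda(T_1\ot G)\bigr]\cdot F_2=\col(T_1G)\cdot F_2=\col(F_1)\cdot F_2$ by Eq.~(\ref{eq:dele1}) applied to $T_1G$.

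I do not expect any serious obstacle here: the identity is essentially built into the recursive definition~(\ref{eq:dele1}), so the content of the proof is to confirm that the same rule persists when the right factor $F_2$ is an arbitrary forest (rather than the canonical tail $T_2\cdots T_m$) and to check that the unit $\etree$ behaves correctly via $\col(\etree)=-\lambda(\etree\ot\etree)$. The only place demanding care is the bookkeeping of the left/right dot actions on $A\ot A$ and keeping track of the three $\lambda$-terms that arise in the inductive step.
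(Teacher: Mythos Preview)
Your proof is correct and follows essentially the same approach as the paper's: both handle the empty-tree case(s) first via $\col(\etree)=-\lambda(\etree\ot\etree)$, then peel off the leftmost tree $T_1$ of $F_1$ and regroup using the recursive definition Eq.~(\ref{eq:dele1}). The only cosmetic difference is that the paper phrases the induction on the sum $\bre(F_1)+\bre(F_2)$, whereas you induct on $\bre(F_1)$ alone; since the reduction step always shortens $F_1$, the two schemes are equivalent and yield the same computation line-for-line.
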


\begin{proof}
It suffices to consider basis elements $F_1, F_2\in \rfs$ by linearity. We have two cases to consider.

\noindent{\bf Case 1.}
$\bre(F_1)=0$ or $\bre(F_2)=0$. In this case, without loss of generality, letting $\bre(F_1)=0$, then $F_1=\etree$ and by Eq.~(\mref{eq:dele}),
\begin{align*}
\col(F_1 F_2)&=\col(\etree F_2)=\col(F_2)-\lambda(\etree\ot F_2)+\lambda(\etree\ot F_2)\\
&=\col(F_2)-\lambda(\etree\ot \etree)\cdot F_2+\lambda(\etree\ot F_2)\\
&=\col(F_2)+\col(\etree)\cdot F_2+\lambda(\etree \ot F_2)\\
&=\etree\cdot\col(F_2)+\col(\etree)\cdot F_2+\lambda(\etree \ot F_2)\\
&=F_1 \cdot \col(F_2) + \col(F_1) \cdot F_2+\lambda (F_1\ot F_2).
\end{align*}

\noindent{\bf Case 2.}
$\bre(F_1)\geq1$ and $\bre(F_2)\geq1$. In this case, we prove the result by induction on the sum of breadths $\bre(F_1)+ \bre(F_2)\geq 2$. For the initial step of $\bre(F_1)+ \bre(F_2)= 2$, we have $F_1=T_1$ and $F_2=T_2$ for some decorated planar rooted trees $T_1, T_2\in \rts$. By Eq.~(\mref{eq:dele1}),
\begin{align*}
\col(F_1 F_2) =\col(T_1 T_2)&= T_1 \cdot \col(T_2) + \col(T_1) \cdot T_2+\lambda (T_1\ot T_2)\\
&=F_1 \cdot \col(F_2) + \col(F_1) \cdot F_2+\lambda (F_1\ot F_2).
\end{align*}
For the induction step of $\bre(F_1)+ \bre(F_2)\geq 3$, without loss of generality, we may suppose $\bre(F_2)\geq \bre(F_1)\geq 1$. If $\bre(F_1)=1$ and $\bre(F_2)\geq2$ , we may write $F_1=T_1$ for some decorated planar rooted trees $T_1\in \rts$. By Eq.~(\mref{eq:dele1}),
\begin{align*}
\col(F_1 F_2) =\col(T_1 F_2)&= T_1 \cdot \col(F_2) + \col(T_1) \cdot F_2+\lambda (T_1\ot F_2)\\
&=F_1 \cdot \col(F_2) + \col(F_1) \cdot F_2+\lambda (F_1\ot F_2).
\end{align*}
If $\bre(F_1)\geq2$, we can
write $F_1=T_{1}{F_{1}}'$ with $\bre(T_{1})=1$ and $\bre({F_{1}}') = \bre(F_1) -1 $. Then
\begin{align*}
&\ \col(F_1 F_2)=\col(T_{1}{F_{1}}' F_2)\\
&\ =T_{1}\cdot\col({F_{1}}'F_2)+\col(T_{1})\cdot({F_{1}}'F_2)+\lambda(T_1\ot {F_{1}}'F_2)\quad (\text{by Eq.~(\mref{eq:dele1})})\\
&\ =T_{1}\cdot\Big({F_{1}}' \cdot \col(F_2) + \col({F_{1}}') \cdot F_2+\lambda({F_{1}}'\ot F_2)\Big)+\col(T_{1})\cdot({F_{1}}'F_2)+\lambda(T_1\ot {F_{1}}'F_2) \\
&\hspace{8cm} (\text{by\ the\ induction\ hypothesis})\\
&\ =(T_{1}{F_{1}}')\cdot\col(F_2)+T_{1}\cdot\col({F_{1}}')\cdot F_2+\lambda(T_1{F_{1}}'\ot F_2)+\col(T_{1})\cdot ({F_{1}}' F_2)+\lambda(T_1\ot {F_{1}}'F_2) \\
&\ =(T_{1}{F_{1}}')\cdot \col(F_2)+\Big(T_{1}\cdot \col({F_{1}}') +\col(T_{1}) \cdot {F_{1}}'+\lambda (T_1\ot {F_{1}}' )\Big)\cdot F_2+\lambda(T_1{F_{1}}'\ot F_2)\\
&\ =(T_{1}{F_{1}}')\cdot \col(F_2)+\col(T_1{F_{1}}')\cdot F_2+\lambda(T_1{F_{1}}'\ot F_2)\\
&\ =F_1\cdot \col(F_2)+\col(F_1)\cdot F_2 +\lambda(F_1\ot F_2)\quad \quad \ \ \text{(by\ the\ induction\ hypothesis)}
\end{align*}
This completes the proof.
\end{proof}

The following lemma shows that $\hrts$ is closed under the coproduct $\col$.
\begin{lemma} \label{lem:cclosed}
For $F\in \hrts$,
\begin{equation}
\col(F)\in \hrts\ot\hrts.
\mlabel{fact:closed}
\end{equation}
\end{lemma}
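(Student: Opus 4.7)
The plan is to induct on the depth of $F$, with a secondary induction on breadth at each depth, mirroring the recursive definition of $\col$. By linearity it suffices to treat basis elements $F \in \rfs$. The two structural facts I will lean on throughout are that each $B^+_\omega$ sends $\hrts$ into $\hrts$ (by construction) and that the concatenation $\mul$ sends $\hrts \otimes \hrts$ into $\hrts$, so that in turn the dot action of Eq.~(\mref{eq:dota}) by an element of $\hrts$ preserves $\hrts \otimes \hrts$.

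For the base case $\dep(F) = 0$, either $F = \etree$ or $F = \bullet_{x_1}\cdots\bullet_{x_m}$ with each $\bullet_{x_i} \in \hrts$. In the first case $\col(\etree) = -\lambda(\etree \otimes \etree) \in \hrts \otimes \hrts$. In the second case Lemma~\mref{lem:rt11} gives an explicit formula as a sum of tensors of the form $\bullet_{x_1}\cdots\bullet_{x_i} \otimes \bullet_{x_j}\cdots\bullet_{x_m}$, each factor of which is manifestly in $\hrts$.

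For the inductive step $\dep(F) = n \geq 1$, I will separate on breadth. If $\bre(F) = 1$, then $F = B^+_\omega(\bar F)$ with $\dep(\bar F) \leq n-1$, so by the outer inductive hypothesis $\col(\bar F) \in \hrts \otimes \hrts$. Then Eq.~(\mref{eq:dbp}) expresses $\col(F)$ as $F \otimes (-\lambda\etree)$, which is in $\hrts \otimes \hrts$, plus $(\id \otimes B^+_\omega)\col(\bar F)$, which is in $\hrts \otimes \hrts$ because $B^+_\omega$ preserves $\hrts$. If $\bre(F) \geq 2$, write $F = T_1\cdots T_m$ and apply Eq.~(\mref{eq:dele1}): the terms $T_1\cdot\col(T_2\cdots T_m)$ and $\col(T_1)\cdot (T_2\cdots T_m)$ lie in $\hrts \otimes \hrts$ by the inner (breadth) induction together with the dot-action observation above, and the remaining summand $\lambda T_1 \otimes T_2\cdots T_m$ is clearly in $\hrts \otimes \hrts$.

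No step here is a genuine obstacle; the argument is a routine bookkeeping verification once the depth/breadth recursion is set up correctly. The only mild subtlety is making sure the two inductions are stratified properly (outer on depth, inner on breadth) so that whenever $\col(\bar F)$ or $\col$ of a shorter forest is invoked, the inductive hypothesis actually applies. Alternatively, one may use Lemma~\mref{lem:colff} in place of Eq.~(\mref{eq:dele1}) to streamline the breadth $\geq 2$ case, reducing it immediately to the $\bre = 1$ case together with the closure of $\hrts \otimes \hrts$ under the bimodule action.
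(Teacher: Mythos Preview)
Your proposal is correct and follows essentially the same approach as the paper: a double induction on depth (outer) and breadth (inner), handling the depth-zero case via Lemma~\ref{lem:rt11}, the breadth-one case via Eq.~(\ref{eq:dbp}), and the higher-breadth case via Eq.~(\ref{eq:dele1}) together with closure of $\hrts\otimes\hrts$ under the dot action. The paper's write-up is slightly more explicit about stating the combined inductive hypothesis (assuming the claim for all smaller depth \emph{and} for the same depth with smaller breadth), but the logic is identical.
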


\begin{proof}
We prove the result by induction on $\dep(F)\geq 0$ for basis elements $F\in \rfs$.
For the initial step of $\dep(F)=0$,
we have $F=\bullet_{x_1}\ldots\bullet_{x_{m}}$ for some $m\geq 0$, with the convention that $F=\etree$ when $m=0$.
If $m=0$, then
\begin{align*}
\col(F)=\col(\etree)=-\lambda(\etree\ot \etree)\in  \hrts \ot \hrts.
\end{align*}
If $m\geq 1$, then by Lemma~\mref{lem:rt11},
\begin{align*}
\col(F)=\col(\bullet_{x_1}\ldots\bullet_{x_{m}})
=&\sum_{i=1}^{m}\bullet_{x_{1}}\cdots\bullet_{x_{i}}\otimes\bullet_{x_{i}}\cdots\bullet_{x_{m}}+\lambda
\sum_{i=1}^{m-1}\bullet_{x_{1}}\cdots\bullet_{x_{i}}\otimes\bullet_{x_{i+1}}\cdots\bullet_{x_{m}}\\
&\in\hrts \ot \hrts.
\end{align*}

Suppose that Eq.~(\mref{fact:closed}) holds for $\dep(F) \leq n$ for an $n\geq 0$ and consider the case of $\dep(F) =n+1$.
For this case, we apply the induction on breadth $\bre(F)$. Since $\dep(F)=n+1 \geq 1$, we get $F\neq \etree$ and $\bre(F)\geq 1$.
If $\bre(F)=1$, since $\dep(F)\geq 1$, we have $F = B_{\omega}^{+}(\lbar{F})$ for some $\omega\in \Omega$ and $\lbar{F} \in \hrts$. By Eq.~(\mref{eq:dbp}), we have
\begin{align*}
\col(F)= \col\Big(B_{\omega}^{+}(\lbar{F})\Big) = F\ot (-\lambda\etree)+(\id\ot B_{\omega}^{+})\col(\lbar{F}).
\end{align*}
By the induction hypothesis on $\dep(F)$,
$$\col(\lbar{F})\in \hrts\ot \hrts \,\text{ and so }\, (\id\ot B_{\omega}^{+})\col(\lbar{F})\in \hrts\ot\hrts.$$
Moreover,  $-\lambda(F\ot \etree)\in \hrts \ot \hrts$ follows from $F\in \hrts$.
Hence
\begin{align*}
{F}\ot (-\lambda\etree)+(\id\ot B_{\omega}^{+})\col(\lbar{F})\in \hrts\ot\hrts.
\end{align*}

Assume that Eq.~(\mref{fact:closed}) holds for  $\dep(F) = n+1$ and $\bre(F)\leq m$, in addition to $\dep(F)\leq n$ by the first induction hypothesis, and consider the case of
$\dep(F) = n+1$ and $\bre(F) =m+1\geq 2$.
Then we may write $F=T_{1}T_{2}\cdots T_{m+1}$ for some $T_1,\ldots , T_{m+1} \in \rts$ and so
\begin{align*}
\col(F)=&\col(T_{1}T_{2}\cdots T_{m+1})\\
=&\ T_{1}\cdot \col(T_{2}\cdots T_{m+1})+\col(T_{1})\cdot (T_{2}\cdots T_{m+1})+\lambda(T_1\ot T_2\cdots T_{m+1}) \quad (\text{by Eq.~(\mref{eq:dele1})}).
\end{align*}
By the induction hypothesis on breadth, we have
$$ \col(T_{2}\cdots T_{m+1})\in  \hrts\ot \hrts\, \text{ and }\, \col(T_i) \in \hrts\ot \hrts,$$
whence by Eq.~(\mref{eq:dota}),
$$T_{1}\cdot \col(T_{2}\cdots T_{m+1})\in \hrts\ot \hrts \, \text{ and }\, \col(T_{1})\cdot (T_{2}\cdots T_{m+1})\in \hrts\ot \hrts.$$
Thus
$$\col(F)=\col(T_{1}T_{2}\cdots T_{m+1})\in \hrts\ot \hrts.$$
This completes the induction on the breadth and hence the induction on the depth.
\end{proof}

We now state our first main result in this subsection.

\begin{theorem}
The pair $(\hrts, \col)$ is a coalgebra (without counit).
\mlabel{thm:rt1}
\end{theorem}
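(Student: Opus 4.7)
The plan is to prove coassociativity $(\col\otimes\id)\col = (\id\otimes\col)\col$ on the basis $\rfs$ by a double induction: an outer induction on the depth $\dep(F)$, and for fixed depth an inner induction on the breadth $\bre(F)$. Lemma~\mref{lem:cclosed} already guarantees that $\col(F)\in\hrts\ot\hrts$, so both sides of the coassociativity identity are well-defined elements of $\hrts^{\ot 3}$, and by linearity it suffices to verify the identity on forests $F\in\rfs$.

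For the base case $\dep(F)=0$, either $F=\etree$ (in which case both sides equal $\lambda^2(\etree\ot\etree\ot\etree)$ by Eq.~(\mref{eq:dele})) or $F=\bullet_{x_1}\cdots\bullet_{x_m}$ with $m\geq 1$. In the latter situation I would feed the explicit formula of Lemma~\mref{lem:rt11} into both $(\col\ot\id)$ and $(\id\ot\col)$, applying Lemma~\mref{lem:rt11} again on each factor of the form $\bullet_{x_i}\cdots\bullet_{x_j}$. A direct double-summation manipulation, tracking the powers of $\lambda$, should show that both sides collect into the symmetric triple sum
\[
\sum_{1\leq i\leq j\leq m}\bullet_{x_1}\cdots\bullet_{x_i}\ot\bullet_{x_i}\cdots\bullet_{x_j}\ot\bullet_{x_j}\cdots\bullet_{x_m}
\]
plus the analogous $\lambda$- and $\lambda^2$-corrected sums over strict inequalities. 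This is a routine but bookkeeping-heavy calculation.

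For the inductive step, suppose coassociativity holds for all forests of depth $\leq n$ and consider $F$ with $\dep(F)=n+1$. I would now induct on $\bre(F)$. For $\bre(F)=1$ we have $F=B^+_\omega(\bar F)$ with $\dep(\bar F)\leq n$, and applying Eq.~(\mref{eq:cdbp}) twice yields
\[
(\id\ot\col)\col(F)=F\ot(-\lambda\etree)\ot(-\lambda\etree)+(\id\ot\id\ot B^+_\omega)(\id\ot\col)\col(\bar F)+(\id\ot(\col\circ B^+_\omega))\col(\bar F),
\]
and a parallel expansion for $(\col\ot\id)\col(F)$; a second application of Eq.~(\mref{eq:cdbp}) on the last term of each side, combined with the outer induction hypothesis applied to $\bar F$, reduces the identity to a tautology. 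For $\bre(F)\geq 2$ I would write $F=F_1F_2$ with both factors of smaller breadth, apply Lemma~\mref{lem:colff} once to compute $\col(F)$, then apply $(\col\ot\id)$ and $(\id\ot\col)$ to each term; using Lemma~\mref{lem:colff} again on the algebra-valued factors together with the bimodule action of Eq.~(\mref{eq:dota}), both sides should reduce, via the inner induction hypothesis on $F_1$ and $F_2$, to the same expression.

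The main obstacle I anticipate is the breadth-induction step: verifying that the derivation-like identity of Lemma~\mref{lem:colff} is compatible with coassociativity requires carefully matching up the $\lambda$-correction term $\lambda(F_1\ot F_2)$ under two different orders of applying $\col$, and keeping track of how the left and right $A$-actions on $A\ot A$ interact with tensor placements in $A^{\ot 3}$. The key algebraic identity to check is the weight-$\lambda$ analogue of the standard fact that a derivation-coproduct is automatically coassociative once it is so on generators, and the cleanest route is to expand both $(\col\ot\id)\col(F_1F_2)$ and $(\id\ot\col)\col(F_1F_2)$ symbolically, group terms by which of $F_1,F_2$ receives $\col$, and match them using the inductive hypothesis.
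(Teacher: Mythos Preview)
Your plan is exactly the paper's: double induction on depth and breadth, Lemma~\mref{lem:rt11} for the depth-zero base, the cocycle relation Eq.~(\mref{eq:cdbp}) for $\bre(F)=1$, and Lemma~\mref{lem:colff} with Sweedler notation for the breadth step.

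One computational slip to fix in the $\bre(F)=1$ case: your displayed three-term formula for $(\id\ot\col)\col(F)$ is inconsistent. Applying $(\id\ot\col)$ to $\col(F)=F\ot(-\lambda\etree)+(\id\ot B^+_\omega)\col(\bar F)$ gives only two terms,
\[
\lambda^2\,F\ot\etree\ot\etree \;+\; (\id\ot\col B^+_\omega)\col(\bar F),
\]
and a second use of Eq.~(\mref{eq:cdbp}) on the latter yields $\bigl((\id\ot B^+_\omega)\col(\bar F)\bigr)\ot(-\lambda\etree)+(\id\ot\id\ot B^+_\omega)(\id\ot\col)\col(\bar F)$. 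Your list contains both the unexpanded term $(\id\ot(\col\circ B^+_\omega))\col(\bar F)$ and one piece of its expansion, which double-counts. With this corrected the argument goes through exactly as in the paper.
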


\begin{proof}
By Lemma~\mref{lem:cclosed}, we only need to verify the the  coassociative law
\begin{equation}
(\id\otimes \col)\col(F)=(\col\otimes \id)\col(F)\,\text{ for } F\in \rfs,
\mlabel{eq:coass}
\end{equation}
which will be proved by induction on $\dep(F)\geq 0$.
For the initial step of $\dep(F)=0$, we have $F=\bullet_{x_{1}}\bullet_{x_{2}}\cdots\bullet_{x_{m}}$ for some $m\geq 0$, with the convention that $F=\etree$ if $m=0$.
When $m=0$, we have
\begin{align*}
(\id\otimes \col)\col(F)=(\id\otimes \col)\col(\etree)=-\lambda\etree\otimes \col(\etree)=\lambda^2 (\etree\ot \etree\ot \etree)=-\lambda \col(\etree)\ot \etree=(\col\otimes \id)\col(\etree).
\end{align*}
When $m\geq 1$, on the one hand,
\allowdisplaybreaks{
\begin{align*}
&\ (\id\ot\col)\col(\bullet_{x_{1}}\bullet_{x_{2}}\cdots\bullet_{x_{m}})\\
=&\ (\id\ot\col)\left(\sum_{i=1}^{m}\bullet_{x_{1}}\cdots\bullet_{x_{i}}
\otimes\bullet_{x_{i}}\cdots\bullet_{x_{m}}+\lambda\sum_{i=1}^{m-1}\bullet_{x_{1}}\cdots\bullet_{x_{i}}
\otimes\bullet_{x_{i+1}}\cdots\bullet_{x_{m}}\right)\quad (\text{by Lemma~\ref{lem:rt11}})\\
=&\sum_{i=1}^{m}\bullet_{x_{1}}\cdots\bullet_{x_{i}}
\otimes\col(\bullet_{x_{i}}\cdots\bullet_{x_{m}})+\lambda\sum_{i=1}^{m-1}\bullet_{x_{1}}\cdots\bullet_{x_{i}}
\otimes\col(\bullet_{x_{i+1}}\cdots\bullet_{x_{m}})\\
=&\sum_{i=1}^{m}\bullet_{x_{1}}\cdots\bullet_{x_{i}}
\otimes\left(\sum_{j=i}^{m} \bullet_{x_{i}}\cdots \bullet_{x_{j}}\ot \bullet_{x_{j}}\cdots \bullet_{x_{m}} +\lambda\sum_{j=i}^{m-1} \bullet_{x_{i}}\cdots \bullet_{x_{j}}\ot \bullet_{x_{j+1}}\cdots \bullet_{x_{m}}\right)\\
&+\lambda\sum_{i=1}^{m-1}\bullet_{x_{1}}\cdots\bullet_{x_{i}}
\otimes\left(\sum_{j=i+1}^{m} \bullet_{x_{i+1}}\cdots \bullet_{x_{j}}\ot \bullet_{x_{j}}\cdots \bullet_{x_{m}} +\lambda\sum_{j=i+1}^{m-1} \bullet_{x_{i+1}}\cdots \bullet_{x_{j}}\ot \bullet_{x_{j+1}}\cdots \bullet_{x_{m}}\right)\\
=&\sum_{i=1}^{m}\sum_{j=i}^{m}\bullet_{x_{1}}\cdots\bullet_{x_{i}}
\otimes \bullet_{x_{i}}\cdots \bullet_{x_{j}}\ot \bullet_{x_{j}}\cdots \bullet_{x_{m}} +\lambda\sum_{i=1}^{m}\sum_{j=i}^{m-1} \bullet_{x_{1}}\cdots\bullet_{x_{i}}
\otimes\bullet_{x_{i}}\cdots \bullet_{x_{j}}\ot \bullet_{x_{j+1}}\cdots \bullet_{x_{m}}\\
&+\lambda\sum_{i=1}^{m-1}\sum_{j=i+1}^{m}\bullet_{x_{1}}\cdots\bullet_{x_{i}}
\otimes \bullet_{x_{i+1}}\cdots \bullet_{x_{j}}\ot \bullet_{x_{j}}\cdots \bullet_{x_{m}} +\lambda^2\sum_{i=1}^{m-1}\sum_{j=i+1}^{m-1} \bullet_{x_{1}}\cdots\bullet_{x_{i}}
\otimes\bullet_{x_{i+1}}\cdots \bullet_{x_{j}}\ot \bullet_{x_{j+1}}\cdots \bullet_{x_{m}}\\
=&\sum_{i=1}^{m}\sum_{j=i}^{m}\bullet_{x_{1}}\cdots\bullet_{x_{i}}
\otimes \bullet_{x_{i}}\cdots \bullet_{x_{j}}\ot \bullet_{x_{j}}\cdots \bullet_{x_{m}} +\lambda\sum_{i=1}^{m}\sum_{j=i}^{m-1} \bullet_{x_{1}}\cdots\bullet_{x_{i}}
\otimes\bullet_{x_{i}}\cdots \bullet_{x_{j}}\ot \bullet_{x_{j+1}}\cdots \bullet_{x_{m}}\\
&+\lambda\sum_{i=1}^{m-1}\sum_{j=i+1}^{m}\bullet_{x_{1}}\cdots\bullet_{x_{i}}
\otimes \bullet_{x_{i+1}}\cdots \bullet_{x_{j}}\ot \bullet_{x_{j}}\cdots \bullet_{x_{m}} +\lambda^2\sum_{i=1}^{m-2}\sum_{j=i+1}^{m-1} \bullet_{x_{1}}\cdots\bullet_{x_{i}}
\otimes\bullet_{x_{i+1}}\cdots \bullet_{x_{j}}\ot \bullet_{x_{j+1}}\cdots \bullet_{x_{m}}
\end{align*}
}
On the other hand,
\allowdisplaybreaks{
\begin{align*}
&\ (\col \ot \id)\col(\bullet_{x_{1}}\bullet_{x_{2}}\cdots\bullet_{x_{m}})\\
=&\ (\col \ot \id)\left(\sum_{i=1}^{m}\bullet_{x_{1}}\cdots\bullet_{x_{i}}
\otimes\bullet_{x_{i}}\cdots\bullet_{x_{m}}+\lambda\sum_{i=1}^{m-1}\bullet_{x_{1}}\cdots\bullet_{x_{i}}
\otimes\bullet_{x_{i+1}}\cdots\bullet_{x_{m}}\right)\quad (\text{by Lemma~\ref{lem:rt11}})\\
=&\sum_{i=1}^{m}\col(\bullet_{x_{1}}\cdots\bullet_{x_{i}})
\otimes\bullet_{x_{i}}\cdots\bullet_{x_{m}}+\lambda\sum_{i=1}^{m-1}\col(\bullet_{x_{1}}\cdots\bullet_{x_{i}})
\otimes\bullet_{x_{i+1}}\cdots\bullet_{x_{m}}\\
=&\sum_{i=1}^{m}\left(\sum_{j=1}^{i} \bullet_{x_{1}}\cdots \bullet_{x_{j}}\ot \bullet_{x_{j}}\cdots \bullet_{x_{i}} +\lambda\sum_{j=1}^{i-1} \bullet_{x_{1}}\cdots \bullet_{x_{j}}\ot \bullet_{x_{j+1}}\cdots \bullet_{x_{i}}\right)\ot \bullet_{x_{i}}\cdots\bullet_{x_{m}}\\
&+\lambda\sum_{i=1}^{m-1}\left(\sum_{j=1}^{i}\bullet_{x_{1}}\cdots \bullet_{x_{j}}\ot \bullet_{x_{j}}\cdots \bullet_{x_{i}} +\lambda\sum_{j=1}^{i-1} \bullet_{x_{1}}\cdots \bullet_{x_{j}}\ot \bullet_{x_{j+1}}\cdots \bullet_{x_{i}}\right)\ot \bullet_{x_{i+1}}\cdots\bullet_{x_{m}}\\
=&\sum_{i=1}^{m}\sum_{j=1}^{i}\bullet_{x_{1}}\cdots\bullet_{x_{j}}
\ot  \bullet_{x_{j}}\cdots \bullet_{x_{i}}\ot \bullet_{x_{i}}\cdots \bullet_{x_{m}}
+\lambda\sum_{i=1}^{m}\sum_{j=1}^{i-1}\bullet_{x_{1}}\cdots\bullet_{x_{j}}
\ot  \bullet_{x_{j+1}}\cdots \bullet_{x_{i}}\ot \bullet_{x_{i}}\cdots \bullet_{x_{m}}\\
&+\lambda\sum_{i=1}^{m-1}\sum_{j=1}^{i}\bullet_{x_{1}}\cdots\bullet_{x_{j}}
\ot \bullet_{x_{j}}\cdots \bullet_{x_{i}}\ot \bullet_{x_{i+1}}\cdots \bullet_{x_{m}} +\lambda^2\sum_{i=1}^{m-1}\sum_{j=1}^{i-1}\bullet_{x_{1}}\cdots\bullet_{x_{j}}
\ot  \bullet_{x_{j+1}}\cdots \bullet_{x_{i}}\ot \bullet_{x_{i+1}}\cdots \bullet_{x_{m}}\\
=&\sum_{i=1}^{m}\sum_{j=1}^{i}\bullet_{x_{1}}\cdots\bullet_{x_{j}}
\ot  \bullet_{x_{j}}\cdots \bullet_{x_{i}}\ot \bullet_{x_{i}}\cdots \bullet_{x_{m}}
+\lambda\sum_{i=2}^{m}\sum_{j=1}^{i-1}\bullet_{x_{1}}\cdots\bullet_{x_{j}}
\ot  \bullet_{x_{j+1}}\cdots \bullet_{x_{i}}\ot \bullet_{x_{i}}\cdots \bullet_{x_{m}}\\
&+\lambda\sum_{i=1}^{m-1}\sum_{j=1}^{i}\bullet_{x_{1}}\cdots\bullet_{x_{j}}
\ot \bullet_{x_{j}}\cdots \bullet_{x_{i}}\ot \bullet_{x_{i+1}}\cdots \bullet_{x_{m}} +\lambda^2\sum_{i=2}^{m-1}\sum_{j=1}^{i-1}\bullet_{x_{1}}\cdots\bullet_{x_{j}}
\ot  \bullet_{x_{j+1}}\cdots \bullet_{x_{i}}\ot \bullet_{x_{i+1}}\cdots \bullet_{x_{m}}\\
=&\sum_{j=1}^{m}\sum_{i=j}^{m}\bullet_{x_{1}}\cdots\bullet_{x_{j}}
\ot  \bullet_{x_{j}}\cdots \bullet_{x_{i}}\ot \bullet_{x_{i}}\cdots \bullet_{x_{m}}
+\lambda\sum_{j=1}^{m-1}\sum_{i=j+1}^{m}\bullet_{x_{1}}\cdots\bullet_{x_{j}}
\ot  \bullet_{x_{j+1}}\cdots \bullet_{x_{i}}\ot \bullet_{x_{i}}\cdots \bullet_{x_{m}}\\
&+\lambda\sum_{j=1}^{m-1}\sum_{i=j}^{m-1}\bullet_{x_{1}}\cdots\bullet_{x_{j}}
\ot \bullet_{x_{j}}\cdots \bullet_{x_{i}}\ot \bullet_{x_{i+1}}\cdots \bullet_{x_{m}} +\lambda^2\sum_{j=1}^{m-2}\sum_{i=j+1}^{m-1}\bullet_{x_{1}}\cdots\bullet_{x_{j}}
\ot  \bullet_{x_{j+1}}\cdots \bullet_{x_{i}}\ot \bullet_{x_{i+1}}\cdots \bullet_{x_{m}}\\
=&\sum_{i=1}^{m}\sum_{j=i}^{m}\bullet_{x_{1}}\cdots\bullet_{x_{i}}
\otimes \bullet_{x_{i}}\cdots \bullet_{x_{j}}\ot \bullet_{x_{j}}\cdots \bullet_{x_{m}} +\lambda\sum_{i=1}^{m-1}\sum_{j=i+1}^{m}\bullet_{x_{1}}\cdots\bullet_{x_{i}}
\otimes \bullet_{x_{i+1}}\cdots \bullet_{x_{j}}\ot \bullet_{x_{j}}\cdots \bullet_{x_{m}} \\
&+\lambda\sum_{i=1}^{m}\sum_{j=i}^{m-1} \bullet_{x_{1}}\cdots\bullet_{x_{i}}
\otimes\bullet_{x_{i}}\cdots \bullet_{x_{j}}\ot \bullet_{x_{j+1}}\cdots \bullet_{x_{m}} +\lambda^2\sum_{i=1}^{m-2}\sum_{j=i+1}^{m-1} \bullet_{x_{1}}\cdots\bullet_{x_{i}}
\otimes\bullet_{x_{i+1}}\cdots \bullet_{x_{j}}\ot \bullet_{x_{j+1}}\cdots \bullet_{x_{m}}\\
&\hspace{7cm} (\text{by exchanging the index of $i$ and $j$}).
\end{align*}
}
Thus
\begin{align*}
(\id\ot\col)\col(\bullet_{x_{1}}\bullet_{x_{2}}\cdots\bullet_{x_{m}})=(\col \ot \id)\col(\bullet_{x_{1}}\bullet_{x_{2}}\cdots\bullet_{x_{m}}).
\end{align*}

Suppose that Eq.~(\mref{eq:coass}) holds for $\dep(F)\leq n$ for an $n\geq 0$ and consider the case of $\dep(F)=n+1$.
We now apply the induction on breadth. Since $\dep(F)=n+1\geq 1$, we have $F\neq \etree$ and $\bre(F)\geq 1$.
If $\bre(F)=1$, then we may write $F=B_{\omega}^{+}(\lbar{F})$ for some $\lbar{F}\in \rfs$ and $\omega\in \Omega$. Hence
\allowdisplaybreaks{
\begin{align*}
(\id\otimes\col)\col(F)=&\ (\id\otimes \col)\col(B_{\omega}^{+}(\lbar{F}))\\
=&\ (\id\otimes \col)\Big( F\otimes (-\lambda\etree)+(\id\otimes B_{\omega}^{+})\col(\lbar{F})\Big)\quad(\text{by Eq.~(\mref{eq:dbp})})\\
=&\ -\lambda F\otimes \col(\etree)+(\id\otimes (\col B_{\omega}^{+}))\col(\lbar{F})\\
=&\ \lambda^2F\ot \etree\ot \etree +(\id\otimes (\col B_{\omega}^{+}))\col(\lbar{F})  \quad(\text{by  Eq.~(\ref{eq:dele})})\\
=&\  \lambda^2F\ot \etree\ot \etree+ \Big(\id\otimes \big( B_{\omega}^{+}\ot (-\lambda\etree) + (\id \ot B_{\omega}^+)\col \big) \Big)\col(\lbar{F}) \quad(\text{by Eq.~(\mref{eq:cdbp})})\\
=&\  \lambda^2F\ot \etree\ot \etree+\Big(\id\otimes B_{\omega}^{+}\otimes (-\lambda\etree)+(\id\otimes \id\otimes B_{\omega}^{+})(\id\otimes \col)\Big)\col(\lbar{F})\\
=&\  \lambda^2F\ot \etree\ot \etree+\Big((\id\otimes B_{\omega}^{+})\col(\lbar{F})\Big)\otimes (-\lambda\etree)+(\id\otimes \id\otimes B_{\omega}^{+})(\id\otimes \col)\col(\lbar{F})\\
=&\  \lambda^2F\ot \etree\ot \etree +\Big((\id\otimes B_{\omega}^{+})\col(\lbar{F})\Big)\otimes (-\lambda\etree)+(\id\otimes \id\otimes B_{\omega}^{+})(\col\otimes \id)\col(\lbar{F})\\
&\hspace{4cm} (\text{by\ the\ induction\ hypothesis})\\
=&\  \lambda^2F\ot \etree\ot \etree+\Big((\id\otimes B_{\omega}^{+})\col(\lbar{F})\Big)\otimes (-\lambda\etree)+(\col\otimes B_{\omega}^{+})\col(\lbar{F})\\
=& \Big(F\ot (-\lambda\etree)+(\id\otimes B_{\omega}^{+})\col(\lbar{F})\Big)\otimes (-\lambda\etree)+(\col\otimes B_{\omega}^{+})\col(\lbar{F})\\
=&\ \col (F)\ot (-\lambda\etree)+(\col\otimes B_{\omega}^{+})\col(\lbar{F}) \quad(\text{by Eq.~(\mref{eq:dbp})}) \\
=&\ (\col\ot\id)\Big(F\ot (-\lambda\etree)+(\id\ot B_{\omega}^{+})\col(\lbar{F})\Big) \\
=&\ (\col\otimes \id)\col(F) \quad(\text{by Eq.~(\mref{eq:dbp})}).
\end{align*}
}
Assume that Eq.~(\mref{eq:coass}) holds for $\dep(F)=n+1$ and $\bre(F)\leq m$, in addition to $\dep(F)\leq n$ by the first induction hypothesis.
Consider the case when $\dep(F)=n+1$ and $\bre(F)=m+1 \geq 2$.
Then $F=F_{1}F_{2}$ for some $F_1, F_2\in \rfs$ with $0< \bre(F_1), \bre(F_2)< \bre(F)$.
Using the Sweedler notation, we may write
\begin{align*}
\Delta_{\epsilon}(F_1)=\sum_{(F_1)}F_{1(1)}\otimes F_{1(2)} \text{\ and \ } \Delta_{\epsilon}(F_2)=\sum_{(F_2)}F_{2(1)}\otimes F_{2(2)}.
\end{align*}
Then
\begin{align*}
&\ (\id\ot\col)\col(F_1F_2)\\
=&\ (\id\ot\col)(F_1 \cdot \col(F_2)+\col(F_1) \cdot F_2+\lambda(F_1\ot F_2))\quad(\text{by Lemma~\mref{lem:colff}})\\
=&\ (\id\ot\col)\left(\sum_{(F_2)}F_1F_{2(1)}\ot F_{2(2)}+\sum_{(F_1)}F_{1(1)}\ot F_{1(2)}F_2+\lambda(F_1\ot F_2) \right) \ \ (\text{by Eq.~(\ref{eq:dota})})\\
=&\ \sum_{(F_2)}F_1F_{2(1)}\ot\col(F_{2(2)})+\sum_{(F_1)}F_{1(1)}\ot\col(F_{1(2)}F_2)+\lambda(F_1\ot \col (F_2))\\
=&\ \sum_{(F_2)}F_1F_{2(1)}\ot\col(F_{2(2)})+\sum_{(F_1)}F_{1(1)}\ot \bigg(F_{1(2)}\cdot \col(F_2)+\col(F_{1(2)})\cdot F_2+\lambda (F_{1(2)}\ot F_2) \bigg)\\
&+\lambda(F_1\ot \col (F_2))\quad(\text{by Lemma~\mref{lem:colff}})\\
=&\ \sum_{(F_2)}F_1F_{2(1)}\ot \left( \sum_{(F_{2(2)})}F_{2(2)(1)}\ot F_{2(2)(2)}\right)+\sum_{(F_1)}F_{1(1)}\ot \left(\sum_{(F_2)}F_{1(2)}F_{2(1)}\ot F_{2(2)}\right)\\
&+\sum_{(F_1)}F_{1(1)}\ot \left(\sum_{(F_{1(2)})}F_{1(2)(1)}\ot F_{1(2)(2)}F_2\right) +\lambda\sum_{(F_1)}F_{1(1)}\ot F_{1(2)}\ot F_2+\lambda\sum_{(F_2)} F_1\ot F_{2(1)}\ot F_{2(2)}\\
&\hspace{5cm}(\text{by the Sweedler notation and Eq.~(\ref{eq:dota})})\\
=&\ \sum_{(F_2)}\sum_{(F_{2(2)})} F_1F_{2(1)}\ot F_{2(2)(1)}\ot F_{2(2)(2)}+\sum_{(F_1)}\sum_{(F_2)}F_{1(1)} \ot F_{1(2)}F_{2(1)}\ot F_{2(2)}\\
&+\sum_{(F_1)}\sum_{(F_{1(2)})}F_{1(1)}\ot F_{1(2)(1)} \ot F_{1(2)(2)}F_2 +\lambda \sum_{(F_1)}F_{1(1)}\ot F_{1(2)}\ot F_2+\lambda \sum_{(F_2)} F_1\ot F_{2(1)}\ot F_{2(2)}.
\end{align*}
Similarly, we have
\begin{align*}
&\ (\col\ot \id)\col(F_1F_2)\\
=&\  \sum_{(F_2)}\sum_{(F_{2(1)})}F_1F_{2(1)(1)}\ot F_{2(1)(2)} \ot F_{2(2)}+\sum_{(F_1)}\sum_{(F_2)}F_{1(1)} \ot F_{1(2)}F_{2(1)}\ot F_{2(2)}\\
&+\sum_{(F_1)}\sum_{(F_{1(1)})}F_{1(1)(1)}\ot F_{1(1)(2)} \ot F_{1(2)}F_2+\lambda \sum_{(F_1)}F_{1(1)}\ot F_{1(2)}\ot F_2+\lambda \sum_{(F_2)} F_1\ot F_{2(1)}\ot F_{2(2)}.
\end{align*}
By the induction hypothesis, we have
\begin{align*}
(\id\ot\col)\col(F_1)=&\sum_{(F_1)}\sum_{(F_{1(2)})}F_{1(1)}\ot F_{1(2)(1)} \ot F_{1(2)(2)}\\
=&(\col\ot \id)\col(F_1)=\sum_{(F_1)}\sum_{(F_{1(1)})}F_{1(1)(1)}\ot F_{1(1)(2)} \ot F_{1(2)}
\end{align*}
and
\begin{align*}
(\id\ot\col)\col(F_2)=&\sum_{(F_2)}\sum_{(F_{2(2)})}F_{2(1)}\ot F_{2(2)(1)} \ot F_{2(2)(2)}\\
=&(\col\ot \id)\col(F_2)=\sum_{(F_2)}\sum_{(F_{2(1)})}F_{2(1)(1)}\ot F_{2(1)(2)} \ot F_{2(2)}.
\end{align*}
Thus
\begin{align*}
(\id\ot\col)\col(F_1F_2)=(\col\ot \id)\col(F_1F_2).
\end{align*}
This completes the induction on the breadth and hence the induction on the depth.
\end{proof}

Now we arrive at our second main result in this subsection.
\begin{theorem}
The quadruple $(\hrts,\, \mul, \etree, \,\col)$ is an  $\epsilon$-unitary bialgebra of weight $\lambda$.
\mlabel{thm:rt2}
\end{theorem}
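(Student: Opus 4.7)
The plan is to observe that all the hard work has been done already, and this theorem is essentially an assembly of three pieces. First, I would note that $(\hrts, \mul, \etree)$ is a unitary associative algebra: the concatenation $\mul$ of decorated planar rooted forests is associative since $\rfs = M(\rts)$ is the free monoid on $\rts$, and the empty tree $\etree$ is the two-sided identity by construction.

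Second, Theorem~\mref{thm:rt1} already establishes that $(\hrts, \col)$ is a (coassociative, not necessarily counitary) coalgebra. So the only remaining ingredient is the compatibility of $\col$ with $\mul$, namely the weighted derivation condition
\begin{equation*}
\col(F_1 F_2) = F_1 \cdot \col(F_2) + \col(F_1) \cdot F_2 + \lambda (F_1 \otimes F_2)
\quad \text{for all } F_1, F_2 \in \hrts,
\end{equation*}
which is precisely Eq.~(\mref{eq:cocycle}) in Definition~\mref{def:iub}. But this is exactly the content of Lemma~\mref{lem:colff}, proved above by induction on the sum of breadths (with the base case handled via Eq.~(\mref{eq:dele1}) and the inductive step reduced to the associativity of the bimodule action in Eq.~(\mref{eq:dota})).

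Putting these pieces together completes the verification that $(\hrts, \mul, \etree, \col)$ satisfies all the axioms of Definition~\mref{def:iub} for a weighted $\epsilon$-unitary bialgebra. There is essentially no obstacle here, since the coproduct $\col$ was engineered in Eqs.~(\mref{eq:dele}), (\mref{eq:dbp}), and (\mref{eq:dele1}) precisely so that the derivation-type compatibility would hold automatically on the generators, and Lemma~\mref{lem:colff} propagates this to all of $\hrts$. As a sanity check one may also verify Remark~\mref{remk:4rem}(\mref{remk:units}): indeed $\col(\etree) = -\lambda(\etree \otimes \etree)$ by Eq.~(\mref{eq:dele}), consistent with $\etree$ being the unit of a weighted $\epsilon$-unitary bialgebra.
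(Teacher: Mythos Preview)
Your proposal is correct and follows essentially the same approach as the paper: the paper's proof simply notes that $(\hrts,\mul,\etree)$ is a unitary algebra and then invokes Lemma~\mref{lem:colff} and Theorem~\mref{thm:rt1}, which is exactly the three-piece assembly you describe. Your added sanity check on $\col(\etree)$ is consistent but not needed for the argument.
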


\begin{proof}
Note that the triple $(\hrts,\, \mul, \etree)$ is a unitary algebra.
Then the result follows from Lemma~\mref{lem:colff} and Theorem~\mref{thm:rt1}.
\end{proof}

\section{Free $\Omega$-cocycle infinitesimal unitary bialgebras}\label{sec:ope}
In this section, we conceptualize the combination of operated algebras and weighted infinitesimal unitary bialgebras,
and show that $\hrts$ is a free object in such category.
Let us start with the following concepts.

\begin{defn}\cite[Section~1.2]{Guo09} Let $\Omega$ be a nonempty set.
\begin{enumerate}
\item
 An {\bf $\Omega$-operated monoid } is a monoid $M$ together with a set of operators $P_{\omega}: M\to M$, $\omega\in \Omega$.

\item
An {\bf $\Omega$-operated  algebra } is an algebra $A$ together with a set of linear operators $P_{\omega}: A\to A$, $\omega\in \Omega$.
\end{enumerate}
\end{defn}

\begin{defn}\cite[Definition~3.17]{GZ}
Let $\lambda$ be a given element of $\bfk$.
\begin{enumerate}
\item  An  {\bf $\Omega$-operated  $\epsilon$-bialgebra} {\bf of weight $\lambda$} is an $\epsilon$-bialgebra $H$ of weight $\lambda$ together with a set of linear operators $P_{\omega}: H\to H$, $\omega\in \Omega$.
\mlabel{it:def1}
\item Let $(H,\, \{P_{\omega}\mid \omega \in \Omega\})$ and $(H',\,\{P'_{\omega}\mid \omega\in \Omega\})$ be two $\Omega$-operated $\epsilon$-bialgebras of weight $\lambda$. A linear map $\phi : H\rightarrow H'$ is called an {\bf $\Omega$-operated $\epsilon$-bialgebra morphism} if $\phi$ is a morphism of $\epsilon$-bialgebras of weight $\lambda$ and $\phi \circ P_\omega = P'_\omega \circ\phi$ for $\omega\in \Omega$.
\end{enumerate}
\end{defn}

By Remark~\mref{remk:4rem} (\mref{remk:units}), the unit of an infinitesimal unitary bialgebra is a group like element of weight $-\lambda$. Involving a weighted 1-cocycle condition, we then propose

\begin{defn}
\begin{enumerate}
\item \mlabel{it:def3}
An {\bf $\Omega$-cocycle $\epsilon$-unitary bialgebra} {\bf of weight $\lambda$} is an $\Omega$-operated  $\epsilon$-unitary bialgebra $(H,\,m,\, 1,\, \Delta, \{P_{\omega}\mid \omega \in \Omega\})$ of weight $\lambda$ satisfying the weighted 1-cocycle condition:
\begin{equation}
\Delta P_{\omega} =  P_{\omega} \otimes (-\lambda 1) + (\id\otimes P_{\omega}) \Delta \quad \text{ for all } \omega\in \Omega.
\mlabel{eq:eqiterated}
\end{equation}

\item The {\bf free $\Omega$-cocycle $\epsilon$-unitary bialgebra of weight $\lambda$ on a set $X$} is an $\Omega$-cocycle $\epsilon$-unitary bialgebra
$(H_{X},\,m_{X}, \,1_{X}, \Delta_{X}, \,\{P_{\omega}\mid \omega \in \Omega\})$ of weight $\lambda$ together with a set map $j_X: X \to H_{X}$  with the property that,
for any $\Omega$-cocycle $\epsilon$-unitary bialgebra $(H,\,m,\,1, \Delta,\,\{P'_{\omega}\mid \omega \in \Omega\})$ of weight $\lambda$ and any set map
$f: X\to H$ whose images are {\bf group like} (that is, $\Delta (f(x))=f(x)\ot  f(x)$ for $x \in X$), there is a unique morphism $\free{f}:H_X\to H$ of
$\Omega$-operated $\epsilon$-unitary bialgebras such that $\free{f}\circ j_X =f$.\mlabel{it:def4}
\end{enumerate}
\mlabel{defn:xcobi}
\end{defn}

\begin{remark}
Note the subtle difference between the weighted cocycle condition Eq.~(\mref{eq:eqiterated}) and the $\epsilon$-cocycle condition in~\cite[Definition~3.17]{GZ}:
\begin{equation*}
\Delta P_{\omega} =  \id \otimes 1 + (\id\otimes P_{\omega}) \Delta \quad \text{ for all } \omega\in \Omega.
\end{equation*}
\end{remark}

The following results generalizes the universal properties which were studied in~\cite{CK98, Fo3, Guo09, Moe01, ZGG16}. Recall from Eq.~(\mref{eq:rdeff}) that
\begin{align*}
\rfs = \lim_{\longrightarrow} \calf_n=\bigcup_{n=0}^{\infty} \calf_n.
\end{align*}

\begin{theorem}\label{thm:propm}
Let $j_{X}: X\hookrightarrow \rfs$, $x \mapsto \bullet_{x}$ be the natural embedding and $m_{\RT}$ be the concatenation product.
\begin{enumerate}
\item
The quadruple $(\rfs, \,\mul,\, \etree, \, \{B_{\omega}^+\mid \omega\in \Omega\})$ together with the $j_X$ is the free $\Omega$-operated monoid on $X$.
\mlabel{it:fomonoid}
\item
The quadruple $(\hrts, \,\mul,\,\etree, \, \{B_{\omega}^+\mid \omega\in \Omega\})$ together with the $j_X$ is the free $\Omega$-operated unitary algebra on $X$.
\mlabel{it:fualg}
\item
 The quintuple $(\hrts, \,\mul,\,\etree, \, \col, \, \{B_{\omega}^+\mid \omega\in \Omega\})$ together with the $j_X$ is the free $\Omega$-cocycle $\epsilon$-unitary bialgebra of weight $\lambda$ on $X$.
 \mlabel{it:fubialg}
\end{enumerate}

\end{theorem}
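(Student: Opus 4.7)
The overall plan is to establish the three statements in order, with part (a) being a standard free-object construction on the combinatorial level, part (b) a straightforward $\bfk$-linearization of (a), and part (c) the substantive claim requiring that the linear extension from (b) also respects the coproduct.

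For (a), given an $\Omega$-operated monoid $(M, \{P_\omega\})$ and a set map $f: X \to M$, I would define $\bar f : \rfs \to M$ by simultaneous induction on depth and breadth: set $\bar f(\etree) := 1_M$ and $\bar f(\bullet_x) := f(x)$ for $x \in X$; define $\bar f(B_\omega^+(F)) := P_\omega(\bar f(F))$; and define $\bar f(T_1 \cdots T_m) := \bar f(T_1)\cdots \bar f(T_m)$ when $m \geq 2$. This recursion is well-defined because every $F \in \rfs$ has a unique parsing as a concatenation of trees, each of which is either of the form $\bullet_x$ or of the form $B_\omega^+(\bar F)$. Existence and uniqueness of $\bar f$ as an $\Omega$-operated monoid morphism then follow directly. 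Part (b) is obtained by extending $\bar f$ $\bfk$-linearly to $\hrts$; associativity and operator-equivariance are preserved by linearity.

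For (c), fix an $\Omega$-cocycle $\epsilon$-unitary bialgebra $(H, m, 1, \Delta, \{P_\omega\})$ of weight $\lambda$ and a set map $f: X \to H$ with $f(x)$ group-like for every $x \in X$. Let $\bar f$ be the $\Omega$-operated unitary algebra morphism from (b). Uniqueness is immediate from (b), so the whole task reduces to showing that $\bar f$ is also a coalgebra map, i.e.
\begin{equation}
\Delta \circ \bar f = (\bar f \otimes \bar f) \circ \col \quad \text{ on } \hrts.
\mlabel{eq:copcompat}
\end{equation}
I would prove Eq.~(\mref{eq:copcompat}) by induction on depth with an inner induction on breadth, mirroring exactly the structure used in the proof of Theorem~\mref{thm:rt1}. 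The base case $F = \etree$ uses Remark~\mref{remk:4rem}~(\mref{remk:units}): $\Delta(\bar f(\etree)) = \Delta(1) = -\lambda(1\otimes 1) = (\bar f \otimes \bar f)(-\lambda\,\etree\otimes\etree)$. The case $F = \bullet_x$ uses precisely the hypothesis that $f(x)$ is group-like.

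The two essential inductive steps are the grafting and concatenation cases. For $F = B_\omega^+(\lbar F)$ with $\bre(F) = 1$, one computes
\begin{align*}
\Delta\bar f(B_\omega^+(\lbar F))
&= \Delta P_\omega(\bar f(\lbar F)) \\
&= P_\omega(\bar f(\lbar F)) \otimes (-\lambda\cdot 1) + (\id \otimes P_\omega)\Delta\bar f(\lbar F) \\
&= \bar f(B_\omega^+(\lbar F)) \otimes \bar f(-\lambda \etree) + (\bar f \otimes \bar f)(\id \otimes B_\omega^+)\col(\lbar F) \\
&= (\bar f \otimes \bar f)\col(B_\omega^+(\lbar F)),
\end{align*}
using the weighted $1$-cocycle condition Eq.~(\mref{eq:eqiterated}) in $H$, the induction hypothesis on $\lbar F$, and the defining Eq.~(\mref{eq:cdbp}) in $\hrts$. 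For $F = T_1\cdots T_m$ with $m \geq 2$, one applies Lemma~\mref{lem:colff} in $\hrts$ together with the derivation identity Eq.~(\mref{eq:cocycle}) for $\Delta$ in $H$; the induction hypothesis on $T_1$ and on $T_2\cdots T_m$ (both of smaller breadth or depth) then completes the step.

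The main obstacle, and the only place group-likeness of $f(x)$ is actually needed, is the pure-leaf base case $F = \bullet_{x_1}\cdots \bullet_{x_m}$ at depth zero with $m \geq 2$. Here $\col(F)$ is given by Lemma~\mref{lem:rt11}, while $\Delta\bar f(F) = \Delta(f(x_1)\cdots f(x_m))$ must be computed by iterating Eq.~(\mref{eq:cocycle}) $m-1$ times; matching the two requires that each $\Delta(f(x_i)) = f(x_i)\otimes f(x_i)$, which is exactly the group-like hypothesis, and a careful bookkeeping of the $\lambda$-terms produced by the derivation rule. Once this base case is settled, the inductive machinery propagates cleanly and yields Eq.~(\mref{eq:copcompat}), establishing (c).
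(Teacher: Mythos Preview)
Your proposal is correct and follows essentially the same approach as the paper: part~(a) via the recursive structure of $\rfs$ (the paper phrases this using the filtration $\calf_n$ and the universal property of free monoids, but the content is identical), part~(b) by linearization, and part~(c) by the same double induction on depth and breadth, using the weighted cocycle condition for the grafting step, Lemma~\mref{lem:colff} together with Eq.~(\mref{eq:cocycle}) for the concatenation step, and the group-like hypothesis for the depth-zero base case via Lemma~\mref{lem:rt11}. The paper's proof and yours differ only in presentation, not in substance.
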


\begin{proof}

(\mref{it:fomonoid}) We only need to verify that $(\rfs, \, \{B_{\omega}^+\mid \omega\in \Omega\})$ satisfies the universal property. Let $(S,  \{P_\omega\mid \omega\in \Omega\})$ be a given $\Omega$-operated monoid  and $f:X\rightarrow S$ a given set map.
We will use induction on $n$ to construct a unique sequence of monoid homomorphisms
\begin{align*}
 \bar{f}_{n}:\calf_n\to S, n\geq 0.
\end{align*}

For the initial step of $n=0$, by the universal property of the free monoid $M(\bullet_{X})$,
the map $\bullet_X \to S$, $\bullet_x\mapsto f(x)$ extends to a unique monoid homomorphism $\bar{f}_0: M(\bullet_{X}) \to S$.
Assume that $\bar{f}_k: \calf_{k}\to S$ has been defined for a $k\geq 0$ and define the set map
\begin{align*}
\bar{f}_{k+1}: \bullet_X\sqcup (\sqcup_{\omega\in \Omega} B_{\omega}^{+}({\calf_{k}})) \to S, \, \bullet_x \mapsto f(x), \, B_{\omega}^{+}({\lbar{F}}) \mapsto  P_\omega(\bar{f}_{k}(\lbar{F})),
\end{align*}
where $x\in X$, $\omega\in \Omega$ and $\lbar{F}\in \calf_{k}$.
Again by the universal property of the free monoid
$M(\bullet_X\sqcup \sqcup_{\omega\in \Omega} B_{\omega}^{+}({\calf_{k}}))$, $\bar{f}_{k+1}$ is extended to a unique monoid homomorphism
\[
\bar{f}_{k+1}: \calf_{k+1} = M(\bullet_X\sqcup (\sqcup_{\omega\in \Omega} B_{\omega}^{+}({\calf_{k}}))) \to S.
\]
Define
$$\bar{f}:= \dirlim \bar{f}_n: \rfs  \to S.$$
Then by the above construction, $\bar{f}$ is the required homomorphism of $\Omega$-operated monoids and the unique one such that $\bar{f} \circ j_X = f$.

(\mref{it:fualg}) It directly follows from Item (\mref{it:fomonoid}).

(\mref{it:fubialg}) By Theorem~\mref{thm:rt2}, $(\hrts, \,\mul,\,\etree, \col)$ is an $\epsilon$-unitary bialgebra of weight $\lambda$.
Morover by Eq.~(\mref{eq:dbp}), $(\hrts, \,\mul,\,\etree, \col,\,\{B_{\omega}^+\mid \omega\in \Omega\})$ is an $\Omega$-cocycle $\epsilon$-unitary bialgebra of weight $\lambda$.

For the freeness, let $(H,\, m,\,1, \Delta,\, \{P_{\omega}\mid \omega \in \Omega\})$ be an $\Omega$-cocycle $\epsilon$-unitary bialgebra of weight $\lambda$ and $f: X\to H$ a set map such that
$$\Delta (f(x))= f(x)\ot f(x)\quad \text{ for all } x\in X.$$
In particular, $(H,\, m,\, 1,\, \{P_{\omega}\mid \omega \in \Omega\})$ is an $\Omega$-operated unitary algebra.
By Item (\mref{it:fualg}), there exists a unique $\Omega$-operated unitary algebra morphism $\free{f}:\hrts \to H$ such that $\free{f}\circ j_X={f}$. It remains to check the compatibility of the coproducts $\Delta$ and $\col$ for which we verify
\begin{equation}
\Delta \free{f} (F)=(\free{f}\ot \free{f}) \col (F)\quad \text{for all } F\in \rfs,
\mlabel{eq:copcomp}
\end{equation}
by induction on the depth $\dep(F)\geq 0$.
For the initial step of $\dep(F)=0$,
we have $F = \bullet_{x_1} \cdots \bullet_{x_m}$ for some $m\geq 0$, with the convention that $F=\etree$ when $m=0$.
If $m=0$, then by Remark~\mref{remk:4rem}~(\mref{remk:units}) and Eq.~(\mref{eq:dele}),
\begin{align*}
\Delta  \bar{f} (F) &= \Delta  \bar{f} (\etree)=\Delta (1)=-\lambda (1\ot 1) =-\lambda \bar{f} (\etree)\ot\bar{f} (\etree) =(\bar{f}\otimes\bar{f})(-\lambda\etree\ot \etree)=(\bar{f}\otimes\bar{f})\col(\etree).
\end{align*}
If $m\geq 1$, then
\allowdisplaybreaks{
\begin{align*}
&\ \Delta\free{f}(\bullet_{x_1} \cdots \bullet_{x_m})
= \Delta \Big( \free{f}(\bullet_{x_1})\cdots\free{f}(\bullet_{x_{m}}) \Big)\\
=&\ \sum_{i=1}^m \Big(\free{f}(\bullet_{x_{1}})\cdots\free{f}(\bullet_{x_{i-1}})\Big)\cdot
\Delta\big(\free{f}(\bullet_{x_{i}})\big) \cdot \Big(\free{f}(\bullet_{x_{i+1}})\cdots\free{f}(\bullet_{x_{m}})\Big)\\
&+\lambda\sum_{i=1}^{m-1}\free{f}(\bullet_{x_{1}})\cdots\free{f}(\bullet_{x_{i}}) \ot \free{f}(\bullet_{x_{i+1}})\cdots \free{f}(\bullet_{x_{m}})
\quad \text{(by Eq.~(\ref{eq:cocycle}))}\\
=&\ \sum_{i=1}^m \Big(\free{f}(\bullet_{x_{1}})\cdots\free{f}(\bullet_{x_{i-1}})\Big)\cdot
\Big(\free{f}(\bullet_{x_{i}})\ot \free{f}(\bullet_{x_{i}})\Big)\cdot \Big(\free{f}(\bullet_{x_{i+1}})\cdots\free{f}(\bullet_{x_{m}})\Big)\\
&+\lambda\sum_{i=1}^{m-1}\free{f}(\bullet_{x_{1}})\cdots\free{f}(\bullet_{x_{i}}) \ot \free{f}(\bullet_{x_{i+1}})\cdots \free{f}(\bullet_{x_{m}})\\
&\hspace{2cm}(\text{by}\,\, \Delta(\free{f}(\bullet_{x_{i}}))=\Delta(f(x_i))= f(x_i)\ot f(x_i) = \bar{f}(x_i)\ot \bar{f}(x_i))\\
=&\  \sum_{i=1}^m \free{f}(\bullet_{x_{1}})\cdots\cdot
\free{f}(\bullet_{x_{i}})\ot \free{f}(\bullet_{x_{i}})\cdots\free{f}(\bullet_{x_{m}})
+\lambda\sum_{i=1}^{m-1}\free{f}(\bullet_{x_{1}})\cdots\free{f}(\bullet_{x_{i}}) \ot \free{f}(\bullet_{x_{i+1}})\cdots \free{f}(\bullet_{x_{m}})\\
=&\  \sum_{i=1}^m \free{f}(\bullet_{x_{1}}\cdots\cdot
\bullet_{x_{i}})\ot \free{f}(\bullet_{x_{i}}\cdots\bullet_{x_{m}})
+\lambda\sum_{i=1}^{m-1}\free{f}(\bullet_{x_{1}}\cdots\bullet_{x_{i}}) \ot \free{f}(\bullet_{x_{i+1}}\cdots \bullet_{x_{m}})\\
&\hspace{4cm}(\text{by $\free{f}$ being a unitary algebra morphism})\\
=&\ (\free{f}\ot\free{f})
\left(\sum_{i=1}^{m}\bullet_{x_{1}}\cdots\bullet_{x_{i}}\otimes\bullet_{x_{i}}\cdots\bullet_{x_{m}}
+\lambda\sum_{i=1}^{m-1}\bullet_{x_{1}}\cdots\bullet_{x_{i}}\otimes\bullet_{x_{i+1}}\cdots\bullet_{x_{m}}\right)\\
=&\ (\free{f}\ot\free{f})\col(\bullet_{x_1} \cdots \bullet_{x_m})\quad(\text{by Lemma~\ref{lem:rt11}}).
\end{align*}
}

Suppose Eq.~(\mref{eq:copcomp}) holds for $\dep(F)\leq n$ for an $n\geq 0$ and consider the case of $\dep(F)=n+1$.
For this case we apply the induction on the breadth $\bre(F)$. Since $\dep(F)=n+1\geq1$, we have $F\neq \etree$ and $\bre(F)\geq 1$.
If $\bre(F)=1$, we have $F=B_{\omega}^+(\lbar{F})$ for some $\lbar{F}\in\rfs$ and $\omega\in \Omega$.  Then
\begin{align*}
\Delta \free{f}(F)&=\Delta \free{f} (B_{\omega}^{+}(\lbar{F}))=\Delta P_\omega(\free{f} (\lbar{F}))\quad(\text{by $\bar{f}$ being an operated unitary algebra morphism})\\
&=P_\omega(\free{f}(\lbar{F}))\ot (-\lambda1)+ (\id\ot P_{\omega})\Delta(\free{f} (\lbar{F}))
\quad(\text{by Eq.~(\mref{eq:eqiterated})})\\
&=P_\omega(\free{f}(\lbar{F}))\ot (-\lambda1)+ (\id\ot P_{\omega})(\free{f}\ot \free{f}) \col (\lbar{F}) \quad(\text{by the induction hypothesis on~}\dep(F)) \\
&=P_\omega(\free{f}(\lbar{F}))\ot (-\lambda1)+ (\free{f}\ot P_{\omega}\free{f}) \col (\lbar{F})\\
&=\free{f}(B_{\omega}^{+}(\lbar{F}))\ot (-\lambda1)+ (\free{f}\ot \free{f}B_{\omega}^+) \col (\lbar{F})
\quad(\text{by $\bar{f}$ being an operated unitary algebra morphism}) \\
&=(\free{f}\ot \free{f})\Big(B_{\omega}^{+}(\lbar{F})\ot (-\lambda\etree)+(\id\ot B_{\omega}^+)\col (\lbar{F})\Big) \\
&=(\free{f}\ot \free{f}) \col (B_{\omega}^+(\lbar{F}))\quad(\text{by Eq.~(\mref{eq:dbp})}) \\
&=(\free{f}\ot \free{f}) \col (F).
\end{align*}

Assume that Eq.~(\mref{eq:copcomp}) holds for $\dep(F)=n+1$ and $\bre(F)\leq m$, in addition to $\dep(F)\leq n$ by the first induction hypothesis, and consider the case when $\dep(F)=n+1$ and $\bre(F)=m+1\geq 2$. Then we can write $F=F_{1}F_{2}$ for some $F_{1},F_{2}\in\rfs$ with $0< \bre(F_{1}), \bre(F_{2}) < m+1$.
Using the Sweedler notation, we can write
\begin{align*}
\Delta_{\epsilon}(F_1)=\sum_{(F_1)}F_{1(1)}\otimes F_{1(2)} \text{\ and \ } \Delta_{\epsilon}(F_2)=\sum_{(F_2)}F_{2(1)}\otimes F_{2(2)}.
\end{align*}
By the induction hypothesis on the breadth, we have
\begin{align*}
&\Delta(\free{f}(F_{1}))=(\free{f} \ot \free{f})\col(F_{1})=\sum_{(F_{1})}\free{f}(F_{1(1)})\ot\free{f} (F_{1(2)}),\\
&\Delta(\free{f}(F_{2}))=(\free{f} \ot \free{f})\col(F_{2})=
\sum_{(F_{2})}\free{f}(F_{2(1)})\ot\free{f} (F_{2(2)}).
\end{align*}
Thus
\allowdisplaybreaks{
\begin{align*}
\Delta \free{f}(F)=&\ \Delta \free{f} (F_{1}F_{2})=\Delta(\free{f}(F_1)\free{f}(F_2))\\
=&\ \free{f}(F_{1})\cdot \Delta (\free{f}(F_{2}))+\Delta(\free{f}(F_{1})) \cdot \free{f}(F_{2})+\lambda\free{f}(F_{1})\ot \free{f}(F_{2})\quad(\text{by Eq.~(\mref{eq:cocycle})})\\
=&\ \free{f}(F_{1})\cdot \bigg(\sum_{(F_{2})}\free{f}(F_{2(1)})\ot \free{f}(F_{2(2)})\bigg)+
\bigg(\sum_{(F_{1})}\free{f}(F_{1(1)})\ot\free{f} (F_{1(2)})\bigg) \cdot \free{f}(F_{2})+\lambda\free{f}(F_{1})\ot \free{f}(F_{2})\\
=&\ \sum_{(F_{2})}\free{f}(F_{1})\free{f}(F_{2(1)})\ot\free{f}(F_{2(2)})
+\sum_{(F_{1})}\free{f}(F_{1(1)})\ot\free{f} (F_{1(2)})\free{f}(F_{2})+\lambda\free{f}(F_{1})\ot \free{f}(F_{2}) \, (\text{by Eq.~(\ref{eq:dota})})\\
=&\ \sum_{(F_{2})}\free{f}(F_{1}F_{2(1)})\ot\free{f}(F_{2(2)})+\sum_{(F_{1})}\free{f}(F_{1(1)})\ot\free{f} (F_{1(2)}F_{2})+\lambda\free{f}(F_{1})\ot \free{f}(F_{2})\\
=&\ (\free{f}\ot \free{f})\bigg(\sum_{(F_{2})}F_{1}F_{2(1)}\ot F_{2(2)}\bigg)+(\free{f}\ot \free{f})\bigg( \sum_{(F_{1})}F_{1(1)}\ot F_{1(2)}F_{2}\bigg)+(\free{f}\ot \free{f})(\lambda F_1\ot F_2)\\
=&\ (\free{f}\ot \free{f})\left(\sum_{(F_{2})}F_{1}F_{2(1)}\ot F_{2(2)}
+\sum_{(F_{1})}F_{1(1)}\ot F_{1(2)}F_{2} +\lambda F_1\ot F_2\right)\\
=&\ (\free{f}\ot \free{f})\left(F_{1}\cdot \sum_{(F_{2})}F_{2(1)}\ot F_{2(2)}+ \Big(\sum_{(F_{1})}F_{1(1)}\ot F_{1(2)}\Big) \cdot F_{2} +\lambda F_1\ot F_2\right) \quad (\text{by Eq.~(\ref{eq:dota})})\\
=&\ (\free{f} \ot \free{f})(F_{1} \cdot \col(F_{2})+\col(F_{1})\cdot F_{2}+\lambda(F_1\ot F_2))\\
=&\ (\free{f}\ot \free{f})\col(F_{1}F_{2})  \quad(\text{by Lemma~\mref{lem:colff}})\\
=&\ (\free{f}\ot \free{f})\col(F).
\end{align*}
}
This completes the induction on the breadth and hence the induction on the depth.
\end{proof}


Let $X=\emptyset$. Then we obtain a freeness of $\hck(\emptyset, \Omega)$, which is the infinitesimal version of decorated noncommutative Connes-Kreimer Hopf algebra by Remark~\mref{re:3ex}~(\mref{it:2ex}).
\begin{coro}
The quintuple $(\hck(\emptyset, \Omega), \,\mul,\,\etree,\, \col,\,\{B_{\omega}^+\mid \omega\in \Omega\})$ is the free $\Omega$-cocycle $\epsilon$-unitary bialgebra of weight $\lambda$ on the empty set, that is, the initial object in the category of $\Omega$-cocycle $\epsilon$-unitary bialgebras of weight $\lambda$.
\mlabel{cor:rt16}
\end{coro}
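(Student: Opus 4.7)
The plan is to deduce Corollary~\mref{cor:rt16} directly from Theorem~\mref{thm:propm}~(\mref{it:fubialg}) by specializing to the empty decoration set $X=\emptyset$. First, I would observe that when $X=\emptyset$, the construction of $\rfs$ in Section~\mref{sec:infbi} degenerates: the generating set $\bullet_X$ is empty, so $\calf_0 = \{\etree\}$, and the recursive definition $\calf_{n+1}=M(\bullet_X \sqcup (\sqcup_{\omega\in\Omega}B^+_\omega(\calf_n)))$ produces exactly the planar rooted forests whose every vertex is decorated by $\Omega$. Consequently $\hrts$ coincides with $\hck(\emptyset,\Omega)$, the operations $\mul$, the unit $\etree$, the coproduct $\col$, and the grafting operators $\{B^+_\omega\mid \omega\in\Omega\}$ all transport verbatim. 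By Theorem~\mref{thm:propm}~(\mref{it:fubialg}), the quintuple $(\hck(\emptyset,\Omega),\mul,\etree,\col,\{B^+_\omega\mid \omega\in\Omega\})$ is therefore an $\Omega$-cocycle $\epsilon$-unitary bialgebra of weight $\lambda$.

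For the universal property, I would then fix an arbitrary $\Omega$-cocycle $\epsilon$-unitary bialgebra $(H,m,1,\Delta,\{P_\omega\mid \omega\in\Omega\})$ of weight $\lambda$ and specialize the freeness statement of Theorem~\mref{thm:propm}~(\mref{it:fubialg}) to $X=\emptyset$. There is a unique set map $f\colon\emptyset\to H$ (the empty map), and the requirement that its images be group-like is vacuous. The theorem then provides a unique $\Omega$-operated $\epsilon$-unitary bialgebra morphism $\bar{f}\colon \hck(\emptyset,\Omega)\to H$ extending the empty map, which is precisely the statement that $\hck(\emptyset,\Omega)$ is initial in the category of $\Omega$-cocycle $\epsilon$-unitary bialgebras of weight $\lambda$. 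This also matches Definition~\mref{defn:xcobi}~(\mref{it:def4}) applied to the empty set, establishing the freeness on $\emptyset$.

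No genuine obstacle arises, since all the work has been done in Theorem~\mref{thm:propm}; the only subtlety worth emphasizing in the write-up is that the hypothesis ``images of $f$ are group-like'' is used nontrivially in Theorem~\mref{thm:propm} (see the base case $\dep(F)=0$ of the compatibility Eq.~(\mref{eq:copcomp})), but for $X=\emptyset$ this condition disappears entirely, so no restriction is placed on the target $H$. Thus the corollary follows with essentially a one-line argument, and the proof I would write is simply: \emph{Take $X=\emptyset$ in Theorem~\mref{thm:propm}~(\mref{it:fubialg}); the empty map from $X$ to any $\Omega$-cocycle $\epsilon$-unitary bialgebra satisfies the group-like hypothesis vacuously, so the universal property reduces to initiality.}
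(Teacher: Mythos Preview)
Your proposal is correct and takes essentially the same approach as the paper: the paper's proof is the one-liner ``It follows from Theorem~\mref{thm:propm}~(\mref{it:fubialg}) by taking $X=\emptyset$,'' and your argument simply unpacks why that specialization works, noting in particular that the group-like hypothesis on $f$ is vacuous for the empty map.
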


\begin{proof}
It follows from Theorem~\mref{thm:propm}~(\mref{it:fubialg}) by taking $X=\emptyset$.
\end{proof}

Taking $\Omega$ to be singleton in Corollary~\mref{cor:rt16},
all vertices of planar rooted forests have the same decoration.
In other words, in this case planar rooted forests have no decorations and that are precisely the one
in the classical noncommutative Connes-Kreimer Hopf algebra, introduced by Foissy~\mcite{Foi02} and Holtkamp~\mcite{Hol03}.

\begin{coro}
Let $\mathcal{F}$ be the set of planar rooted forests without decorations.
Then the quintuple $(\bfk\calf, \,\mul,\,\etree,\, \col,\, B^+)$ is the free cocycle $\epsilon$-unitary bialgebra of weight $\lambda$ on the empty set, that is, the initial object in the category of $\Omega$-cocycle $\epsilon$-unitary bialgebras of weight $\lambda$.
\mlabel{coro:rt16}
\end{coro}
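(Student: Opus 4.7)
The plan is to deduce this corollary as a direct specialization of Corollary~\mref{cor:rt16}. Corollary~\mref{cor:rt16} establishes the freeness of $(\hck(\emptyset, \Omega), \mul, \etree, \col, \{B^+_\omega \mid \omega \in \Omega\})$ as an $\Omega$-cocycle $\epsilon$-unitary bialgebra of weight $\lambda$ on the empty set, for any nonempty index set $\Omega$. The key observation is that when $\Omega = \{\omega_0\}$ is a singleton, every vertex of every tree in $\mathcal{F}(\emptyset, \Omega)$ carries the same (and hence essentially no) decoration, and the family of grafting operators $\{B^+_\omega \mid \omega \in \Omega\}$ collapses to the single operator $B^+ := B^+_{\omega_0}$.

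First I would make the identification precise: define a bijection $\mathcal{F}(\emptyset, \{\omega_0\}) \to \mathcal{F}$ that forgets the (uniform) vertex decoration, and extend it $\bfk$-linearly to an isomorphism $\Phi: \hck(\emptyset, \{\omega_0\}) \to \bfk \mathcal{F}$. It is immediate that $\Phi$ is a unitary algebra isomorphism with respect to the concatenation product and the empty tree, and it intertwines the grafting operators: $\Phi \circ B^+_{\omega_0} = B^+ \circ \Phi$. By construction of the coproduct $\col$ via Eq.~(\mref{eq:dele}), Eq.~(\mref{eq:cdbp}) and Eq.~(\mref{eq:dele1}), which depend only on the combinatorial structure and not on the specific decoration, $\Phi$ also intertwines the coproducts, so $\Phi$ is an isomorphism of $\{\omega_0\}$-cocycle $\epsilon$-unitary bialgebras of weight $\lambda$.

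Next I would transport the universal property along $\Phi$. Given any cocycle $\epsilon$-unitary bialgebra $(H, m, 1, \Delta, P)$ of weight $\lambda$ equipped with a single linear operator $P: H \to H$ satisfying the weighted 1-cocycle condition of Eq.~(\mref{eq:eqiterated}), we view $(H, m, 1, \Delta, \{P\})$ as a $\{\omega_0\}$-cocycle $\epsilon$-unitary bialgebra. Since $X = \emptyset$, there is no set map to specify, and Corollary~\mref{cor:rt16} produces a unique morphism of $\{\omega_0\}$-operated $\epsilon$-unitary bialgebras $\hck(\emptyset, \{\omega_0\}) \to H$. Precomposing with $\Phi^{-1}$ gives the required unique morphism $\bfk \mathcal{F} \to H$ of cocycle $\epsilon$-unitary bialgebras sending $B^+$ to $P$.

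There is no genuine obstacle here; the work is entirely bookkeeping, and the only subtlety to be careful about is confirming that the notions of ``$\{\omega_0\}$-operated'' and ``operated by a single operator'' agree, and that the weighted 1-cocycle condition in Eq.~(\mref{eq:eqiterated}) for the family $\{B^+_{\omega_0}\}$ is literally the same as the condition for $B^+$. Both are immediate from the definitions, so the proof reduces to the single sentence: \emph{Take $\Omega$ to be a singleton set in Corollary~\mref{cor:rt16}.}
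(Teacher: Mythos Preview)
Your proposal is correct and matches the paper's own proof exactly: the paper simply says ``It follows from Corollary~\mref{cor:rt16} by taking $\Omega$ to be a singleton set.'' Your additional bookkeeping about the identification $\Phi$ and the transport of the universal property is sound and merely spells out what that one sentence means.
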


\begin{proof}
It follows from Corollary~\mref{cor:rt16}  by taking $\Omega$ to be a singleton set.
\end{proof}

\smallskip

\noindent {\bf Acknowledgments}:
This work was supported by the National Natural Science Foundation
of China (Grant No.\@ 11771191and 11861051).
Yi Zhang would like to express his gratitude to Professor Foissy for telling the facts on the 1-cocycle condition, which make it possible to construct a free $\Omega$-cocycle $\epsilon$-unitary bialgebra on a set $X$.

\end{document}